\DeclareMathSymbol{\twoheadrightarrow} {\mathrel}{AMSa}{"10}
\def\Q{{\mathbf Q}}
        \def\CC{{\mathfrak C}}
\def\Z{{\mathbf Z}}
\def\C{{\mathbf C}}
\def\H{{\mathrm H}}
\def\ST{{\mathbf S}}
\def\Sn{{\mathbf S}_n}
\def\An{{\mathbf A}_n}
\def\Gal{\mathrm{Gal}}
\def\Perm{\mathrm{Perm}}
\def\Lie{\mathrm{Lie}}
\def\End{\mathrm{End}}
\def\Aut{\mathrm{Aut}}
\def\Hom{\mathrm{Hom}}
    \def\RR{\mathfrak{R}}
\def\I{\mathrm{Id}}
\def\GL{\mathrm{GL}}
\def\SL{\mathrm{SL}}
                \def\sL{\mathfrak{sl}}
                                                \def\su{\mathfrak{su}}
                                                \def\u{\mathfrak{u}}
        \def\K_a{\bar{K}}
\def\E{\mathrm{E}}
\def\dim{\mathrm{dim}}
\def\g{{\mathfrak g}}
\def\c{{\mathfrak c}}
\def\G{{\mathbf G}}
\def\GG{{\mathfrak G}}
                          \def\k{{\mathfrak k}}
                           \def\Hdg{\mathrm{Hdg}}
                      \def\hdg{\mathrm{hdg}}
                            \def\Tr{\mathrm{Tr}}
                            \def\U{\mathrm{U}}
                            \def\f{{\mathfrak f}}
\newtheorem{thm}{Theorem}[section]
\newtheorem{lem}[thm]{Lemma}
\newtheorem{cor}[thm]{Corollary}
\newtheorem{prop}[thm]{Proposition}
\theoremstyle{definition}
\newtheorem{rem}[thm]{Remark}
\newtheorem{sect}[thm]{}
\title[Hodge groups of certain superelliptic jacobians]{Hodge groups of certain superelliptic jacobians}
\author{Jiangwei Xue}
\address{Department of Mathematics, Pennsylvania
State University, University Park, PA 16802, USA}
\email{xue\_j\char`\@math.psu.edu}
\author{Yuri G. Zarhin}
\address{Department of Mathematics, Pennsylvania
State University, University Park, PA 16802, USA}
\email{zarhin\char`\@math.psu.edu}
\begin{document}
\maketitle

Throughout this paper $K$ is a field of characteristic zero, $\K_a$ its
algebraic closure and $\Gal(K)=\Aut(\K_a/K)$ the absolute Galois group of $K$.
 If $X$ is an abelian variety over
$\K_a$ then we write $\End(X)$ for the ring of all its
$\K_a$-endomorphisms and $\End^{0}(X)$ for the corresponding
$\Q$-algebra $\End(X)\otimes\Q$; the notation $1_X$ stands for the
identity automorphism of $X$.

Let $f(x)\in K[x]$ be a polynomial of degree $n\ge 3$ with
coefficients in $K$ and without multiple roots, $\RR_f\subset \K_a$
the ($n$-element) set of roots of $f$ and $K(\RR_f)\subset \K_a$
the splitting field of $f$. We write $\Gal(f)=\Gal(f/K)$ for the
Galois group $\Gal(K(\RR_f)/K)$ of $f$; it permutes roots of $f$
and may be viewed as a certain permutation group of $\RR_f$, i.e.,
as as a subgroup of the group $\Perm(\RR_f)\cong\Sn$ of
permutation of $\RR_f$. ($\Gal(f)$ is transitive if and only if
$f$ is irreducible.)

 Suppose that  $p$ is a prime that does {\sl not} divide $n$ and
 a positive integer $q=p^r$ is a power of $p$.
 We write $C_{f,q}$
for the superelliptic $K$-curve $y^q=f(x)$ and $J(C_{f,q})$ for
its jacobian. Clearly, $J(C_{f,q})$ is an abelian variety that is
defined over $K$ and
$$\dim(J(C_{f,q}))=\frac{(n-1)(q-1)}{2}.$$
Assume that $K$ contains a primitive $q$th root of unity $\zeta_q$. In a series
of papers \cite{ZarhinMRL,ZarhinCrelle,ZarhinCamb,ZarhinM}, one of the authors
(Y.Z.) discussed the structure of $\End^0(J(C_{f,q}))$, assuming that $n\ge 5$
and the Galois group $\Gal(f)$ of $f(x)$ over $K$ is, at least, doubly
transitive. In particular, he proved that if $n \ge 5$ and $\Gal(f)$ coincides
either with full symmetric group $\Sn$ or with  alternating group $\An$ then
$\End^0(J(C_{f,q}))$ is (canonically) isomorphic to a product
$\prod_{i=1}^r\Q(\zeta_{p^i})$ of cyclotomic fields. (If $q=p$ then we proved
that $\End(J(C_{f,p}))=\Z[\zeta_p]$.) More precisely, if $q\ne p$ then the map
$(x,y) \to (x,y^p)$ defines the map of curves $C_{f,q}\to C_{f,q/p}$, which
induces (by Albanese functoriality) the surjective homomorphism $J(C_{f,q})\to
J(C_{f,q/p})$
 of abelian varieties over $K$; we write $J^{(f,q)}$ for the identity component of its kernel.
 (If $q=p$ then we put $J^{(f,q)}=J(C_{f,p})$.) One may check \cite{ZarhinM}  that
  $J(C_{f,q})$ is $K$-isogenous to the product $\prod_{i=1}^r J^{(f,p^i)}$ and the automorphism
  $\delta_q: (x,y)\mapsto (x,\zeta_q y)$ of $C_{f,q}$ gives rise to an embedding
  $\Z[\zeta_q]\hookrightarrow \End(J^{(f,q)}), \ \zeta_q \mapsto \delta_q$. What was actually proved in \cite{ZarhinM,ZarhinMZ2}
  is that
$$\Z[\zeta_q]\cong \Z[\delta_q]= \End(J^{(f,q)})$$ if $\Gal(f)=\Sn$, $p\ge 3$ and
$n\ge 4$ or $\Gal(f)=\An$ and $n\ge 5$.

Let us assume that $K\subset\C$ and let the field $\K_a$ be the algebraic
closure of $K$ in $\C$. This allows us to consider $J(C_{f,q})$ and $J^{(f,q)}$
 as complex abelian varieties.
Our goal is to study the (reductive $\Q$-algebraic connected) Hodge group
$\Hdg=\Hdg(J^{(f,q)})$ of $J^{(f,q)}$. Notice that when $q=2$ (i.e., in the
hyperelliptic case) this group was completely determined in \cite{ZarhinMMJ}
(when $f(x)$ has ``large" Galois group).  When $q>2$ we determined in our
previous paper \cite{XueZ} the center of $\Hdg(J^{(f,q)})$, also assuming that
the Galois group of $f(x)$ is ``large".

Let us assume that $q>2$. In order to describe our results let us recall that
the jacobian $J(C_{f,q})$ carries the canonical principal polarization that is
invariant under all automorphisms (induced by automorphisms) of $C_{f,q}$. This
implies that the induced polarization on the abelian subvariety $J^{(f,q)}$ is
$\delta_q$-invariant. This polarization gives rise to the $\delta_q$-invariant
nondegenerate alternating $\Q$-bilinear form
$$\psi_q: \H_1(J^{(f,q)},\Q) \times \H_1(J^{(f,q)},\Q) \to \Q$$
on the first rational homology group of the complex abelian variety
$J^{(f,q)}$. On the other hand, $\H_1(J^{(f,q)},\Q)$ carries the natural
structure of $\Q[\delta_q]\cong \Q(\zeta_q)$-vector space. The
$\delta_q$-invariance of $\psi_q$ implies that
$$\psi_q( e x,y)= \psi_q(x,\bar{e}y) \quad \forall e\in \Q(\zeta_q); \ x,y \in
\H_1(J^{(f,q)},\Q).$$ Here $e \mapsto \bar{e}$ stands for the complex
conjugation map. Let
$$\Q(\zeta_q)^{+}=\{e\in \Q(\zeta_q) \mid \bar{e}=e\}$$
be the maximal totally real subfield of the cyclotomic CM field $\Q(\zeta_q)$
and let
$$\Q(\zeta_q)_{-}=\{e\in \Q(\zeta_q) \mid \bar{e}=-e\}.$$
Pick a non-zero element $\alpha \in \Q(\zeta_q)_{-}$. Now the standard
construction (see, for instance, \cite[p. 531]{Ribet3}) allows us to define the
non-degenerate $\Q(\zeta_q)$-sesquilinear Hermitian form
$$\phi_q:\H_1(J^{(f,q)},\Q) \times \H_1(J^{(f,q)},\Q) \to \Q(\zeta_q)$$
such that
$$\psi_q(x,y)=\Tr_{\Q(\zeta_q)/\Q}(\alpha \phi_q(x,y)) \ \forall x,y \in
\H_1(J^{(f,q)},\Q).$$ We write $\U(\H_1(J^{(f,q)},\Q),\phi_q)$ for the unitary
group of $\phi_q$ of the $\Q(\zeta_q)$-vector space $\H_1(J^{(f,q)},\Q)$,
viewed as an algebraic $\Q$-subgroup of $\GL(\H_1(J^{(f,q)},\Q))$ (via Weil's
restriction of scalars from $\Q(\zeta_q)^{+}$ to $\Q$ (ibid). Since the Hodge
group respects the polarization and commutes with endomorphisms of $J^{(f,q)}$,
$$\Hdg(J^{(f,q)})\subset \U(\H_1(J^{(f,q)},\Q),\phi_q).$$

Our main result is the following statement.

\begin{thm}
\label{main} Suppose that  $n\ge 4$ and   $p$ is a prime that does not divide
$n$. Let $f(x) \in \C[x]$ be a degree $n$ polynomial without multiple roots.
Let $r$ be a positive integer and $q=p^r$. Suppose that there exists a subfield
$K$ of $\C$ that contains all the coefficients of $f(x)$. Let us assume that
$f(x)$ is irreducible over $K$ and the Galois group $\Gal(f)$ of $f(x)$ over
$K$ is either $\Sn$ or  $\An$. Assume additionally that either $n \ge 5$ or
$n=4$ and $\Gal(f)=\ST_4$.

Suppose that $n>q$ and one of the following three conditions holds:
\begin{itemize}
\item[(A)] $q<n<2q$;

\item[(B)] $p$ is odd and $n\not\equiv 1 \mod q$;

\item [(C)] $p=2$, $n\not\equiv 1 \mod q$ and $n\not \equiv q-1 \mod 2q$.
\end{itemize}

Then $\Hdg(J^{(f,q)})= \U(\H_1(J^{(f,q)},\Q),\phi_q)$.
\end{thm}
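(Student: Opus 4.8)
The plan is to establish the reverse inclusion $\U(\H_1(J^{(f,q)},\Q),\phi_q)\subseteq\Hdg(J^{(f,q)})$; write $\Hdg$ for the Hodge group, $\U$ for the unitary group and $V=\H_1(J^{(f,q)},\Q)$. First I would invoke \cite{ZarhinM,ZarhinMZ2}: under the present hypotheses $\End^0(J^{(f,q)})=\Q(\zeta_q)$, so $V$ is a $\Q(\zeta_q)$–vector space of dimension $n-1$, and writing $V\otimes_\Q\bar\Q=\bigoplus_\sigma V_\sigma$ over the embeddings $\sigma\colon\Q(\zeta_q)\hookrightarrow\bar\Q$ each $V_\sigma$ has $\bar\Q$–dimension $n-1$. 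Since the commutant of $\Hdg$ in $\End_\Q(V)$ is $\Q(\zeta_q)$ and $\Hdg$ is reductive, the double centralizer theorem shows the $\Q$–algebra generated by $\Hdg$ is all of $\End_{\Q(\zeta_q)}(V)$; consequently $\Hdg_{\bar\Q}$ acts absolutely irreducibly on each $V_\sigma$, the $V_\sigma$ are pairwise non-isomorphic as $\Hdg_{\bar\Q}$–modules, and the polarization $\phi_q$ makes $V_{\bar\sigma}\cong V_\sigma^{\ast}$. I would also record the Hodge numbers: for $\sigma=\sigma_a$ with $a\in(\Z/q)^\times$, $1\le a\le q-1$, one has $\dim_{\bar\Q}V_{\sigma_a}^{-1,0}=d_a:=\lceil an/q\rceil-1$, so $d_a+d_{q-a}=n-1$; and since $n>q$, $d_{a+1}-d_a\ge\lceil n/q\rceil-1\ge1$, so $a\mapsto d_a$ is strictly increasing on $\{1,\dots,q-1\}$ — in particular $d_1$ is the strict minimum and $d_1<(n-1)/2$.

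Next I would exploit the Hodge cocharacter. The Hodge decomposition gives a cocharacter $\nu$ of $\Hdg_{\bar\Q}$ acting on each $V_{\sigma_a}$ with exactly two eigenvalues, of multiplicities $(d_a,n-1-d_a)$. Because $\Hdg$ is defined over $\Q$, every $\Gal(\bar\Q/\Q)$–conjugate of $\nu$ still factors through $\Hdg_{\bar\Q}$, and since $\Gal(\bar\Q/\Q)$ permutes the $\sigma_a$ transitively, for each fixed $\sigma$ the projections into $\mathfrak h_\sigma:=\pr_\sigma(\Lie(\Hdg)\otimes\bar\Q)\subseteq\mathfrak{gl}(V_\sigma)$ of these conjugates yield, for \emph{every} $b\in(\Z/q)^\times$, a semisimple element of $\mathfrak h_\sigma$ with exactly two eigenvalues, of multiplicities $(d_b,n-1-d_b)$. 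So $\mathfrak h_\sigma$ is an irreducible reductive subalgebra of $\mathfrak{gl}_{n-1}$ admitting two-eigenvalue semisimple elements of all these multiplicities.

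The heart of the argument is then a classification of the irreducible reductive subalgebras $\mathfrak h\subseteq\mathfrak{gl}_{n-1}$ that possess \emph{some} semisimple element with exactly two eigenvalues. Beside those containing $\sL_{n-1}$, the list comprises $\mathfrak{sp}_{n-1}$ and $\mathfrak{so}_{n-1}$ (which admit only the balanced multiplicity $((n-1)/2,(n-1)/2)$); the subalgebras $\bigoplus_i\sL_{c_i}$ acting on $\bigotimes_i\bar\Q^{c_i}$ with $\prod_i c_i=n-1$ and at least two factors, whose two-eigenvalue multiplicities $(k,n-1-k)$ necessarily satisfy $(n-1)/c_{i_0}\mid k$ for some $i_0$, hence $\gcd(k,n-1)>1$; and finitely many ``exotic'' embeddings ($\sL_a$ via $\wedge^2$ or $\mathrm{Sym}^2$, spin and half–spin representations, $\mathfrak g_2\subseteq\mathfrak{gl}_7$), each again imposing a strong divisibility restriction on the admissible multiplicities. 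Hence, if $\mathfrak h_\sigma$ failed to contain $\sL(V_\sigma)$, the whole multiset $\{d_b\}$ would be trapped in one such restricted pattern; and conditions (A), (B), (C), via $d_b=\lceil bn/q\rceil-1$, are precisely what rules this out. Under (A) one has $d_1=1$, and an irreducible subalgebra with a two-eigenvalue semisimple element one of whose multiplicities is $1$ is necessarily $\sL_{n-1}$; under (B) and (C) the congruences ($n\not\equiv1\bmod q$, and for $p=2$ also $n\not\equiv q-1\bmod 2q$) are exactly what makes some $d_b$ coprime to $n-1$, defeating the $\bigoplus\sL_{c_i}$ and the exotic patterns; and $\mathfrak{sp}$, $\mathfrak{so}$ are excluded by $d_1<(n-1)/2$, which needs only $n\ge4$. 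Therefore $\mathfrak h_\sigma\supseteq\sL(V_\sigma)$ for every $\sigma$, so $\Hdg^{\mathrm{der}}_{\bar\Q}$ surjects onto $\SL(V_\sigma)$ for every place.

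Finally I would assemble the factors. As $\Lie(\Hdg)\otimes\bar\Q\subseteq\Lie(\U)\otimes\bar\Q$, the semisimple algebra $\Lie(\Hdg^{\mathrm{der}})\otimes\bar\Q$ sits inside $\bigoplus_\tau\sL(V_{\sigma_\tau})$ — one $\sL_{n-1}$ per conjugate pair $\{\sigma_\tau,\bar\sigma_\tau\}$, acting on $V_{\sigma_\tau}$ and contragrediently on $V_{\bar\sigma_\tau}$ — and surjects onto each factor. Goursat's lemma then makes it the full product unless two factors are linked; but a linking would give an isomorphism $V_{\sigma_1}\cong V_{\sigma'}$ of $\Hdg^{\mathrm{der}}_{\bar\Q}$–modules with $\sigma'\in\{\sigma_2,\bar\sigma_2\}$ lying over a place distinct from that of $\sigma_1$ (that $\sigma'\ne\bar\sigma_1$ is where $n\ge4$ enters, via non-self-duality of the standard $\SL_{n-1}$–module), and evaluating this isomorphism on the $\Gal(\bar\Q/\Q)$–conjugates of $\nu$ would force $d_c\in\{d_{jc},d_{q-jc}\}$ for all $c\in(\Z/q)^\times$ with some fixed $j\not\equiv\pm1\bmod q$; taking $c=1$ contradicts $d_1$ being the strict minimum of the $d_a$. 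Hence $\Hdg^{\mathrm{der}}=\U^{\mathrm{der}}$, and combining this with the equality $Z(\Hdg)^\circ=Z(\U)$ from \cite{XueZ} and with $\Hdg=Z(\Hdg)^\circ\cdot\Hdg^{\mathrm{der}}$ yields $\Hdg\supseteq Z(\U)\cdot\U^{\mathrm{der}}=\U$, so $\Hdg=\U$. The genuinely hard step is the classification of two-eigenvalue irreducible reductive subalgebras of $\mathfrak{gl}_{n-1}$ together with the accompanying arithmetic check that (A), (B), (C) are exactly the congruence conditions blocking every ``small'' multiplicity pattern for the $\{d_b\}$; the remaining steps are essentially formal.
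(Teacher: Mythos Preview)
Your architecture matches the paper's: show each projection $\mathfrak h_\sigma$ contains $\sL(V_\sigma)$, assemble the factors by a Goursat-type argument, and invoke \cite{XueZ} for the center. The divergence, and the gaps, lie in the two steps you yourself flag as hard.

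For the first, the paper does \emph{not} classify the two-eigenvalue irreducible reductive subalgebras of $\End_C(W)$. It proves instead a single lemma (Lemma~\ref{serre}, following Serre): if an irreducible semisimple $\g\subset\End_C(W)$ contains a diagonalizable operator with exactly two eigenvalues of \emph{relatively prime} multiplicities, then $\g=\sL(W)$. The argument is short: coprimality forces $\g$ to be simple (if any tensor-factor component of $f$ vanished, all multiplicities would share a common factor $>1$; then one invokes \cite{ZarhinChicago}), and the one-parameter torus with Lie algebra $C\cdot f$ contains $\mu_h\cdot\I$ with $h=\dim_C W$, so the center of the corresponding simple group has order divisible by $h$, which by a lemma of Serre forces $\SL(W)$. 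One then transports this to \emph{every} $\sigma$ by the observation that $\dim_C\{E\k^{ss}\}_\sigma$ is independent of $\sigma$, so no Galois-conjugation of the Hodge cocharacter is needed. Your classification sketch, by contrast, has real holes: tensor factors need not all be of type $\sL$; you do not argue that a two-eigenvalue element in a tensor product must have some factor equal to zero; and the ``exotic'' list is asserted rather than shown exhaustive (nothing you wrote excludes, say, $E_7$ in its $56$-dimensional representation, or $\sL_m$ on a higher exterior power).

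For the second step, your assertion that conditions (B) and (C) ``are exactly what makes some $d_b$ coprime to $n-1$'' is precisely the content of Propositions~\ref{primeint} and~\ref{qint}, and is not a routine check. Writing $n=kq+c$ with $1\le c\le q-1$ and $d=c-1$, the paper chooses $i$ so that the determinant $id-qj$ (with $j=[ci/q]$) equals $\pm 1$ or a power of $p$, and then a Cramer argument shows any common prime of $n-1$ and $[ni/q]$ must divide that determinant; the case $p\mid d$ needs a further layer (working modulo $q'=q/(d,q)$ and testing both $i$ and $i+q'$), and for $p=2$ the residue $n\equiv -1\bmod q$ must be handled separately. This arithmetic is where (A), (B), (C) actually enter, and it occupies an entire section of the paper. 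In short: your plan is sound, but the paper's Lemma~\ref{serre} replaces your classification with something both easier and sufficient, and the ``accompanying arithmetic check'' you defer is the genuine work.
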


\begin{rem}
The case of $q=p=3$ was earlier treated in \cite{ZarhinCrelle}.
\end{rem}

Lefschetz's theorem about algebraicity of $2$-dimensional Hodge classes and
classical invariant theory for the unitary groups \cite[Theorem 0 on p. 524;
see also pp. 531--532 ]{Ribet3} imply the following corollary to Theorem
\ref{main}.

\begin{cor}
Let $(n,p,q, f(x))$ satisfy the conditions of Theorem \ref{main}. Then every
Hodge class on each self-product $(J^{(f,q)})^m$ of $J^{(f,q)}$ can be
presented as a linear combination with rational coefficients of products of
divisor classes. In particular, the Hodge conjecture holds true for
$(J^{(f,q)})^m$.
\end{cor}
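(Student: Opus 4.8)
This is a corollary of Theorem~\ref{main}: once $\Hdg(J^{(f,q)})$ is known to coincide with the full unitary group $\U(V,\phi_q)$ of the $\Q(\zeta_q)$-Hermitian form on $V=\H_1(J^{(f,q)},\Q)$, the statement becomes a question in invariant theory. The plan is to combine three inputs: Theorem~\ref{main}; the first and second fundamental theorems of invariant theory for unitary groups; and Lefschetz's theorem on $(1,1)$-classes.

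First I would translate the problem into representation theory. Write $A=J^{(f,q)}$. For each $m$ and each $k$, the Künneth formula together with the identification $\H^{\bullet}(A^{m},\Q)=\bigwedge^{\bullet}\H^{1}(A^{m},\Q)$ realizes the space of rational Hodge classes of codimension $k$ on $A^{m}$ canonically as the space of $\Hdg(A)$-invariants in the appropriate Tate-twisted direct summand of the mixed tensor powers of $\H^{1}(A,\Q)$ and its dual $\H_{1}(A,\Q)$. Thus the whole bigraded $\Q$-algebra of Hodge classes on all self-products of $A$ is the algebra of $\Hdg(A)$-invariants in the full mixed tensor algebra on $\H^{1}(A,\Q)$, with the degree-$1$ (codimension-$1$) part consisting of the bilinear invariants, which live inside $\H^{2}$ of the two-fold self-products. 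By Lefschetz's theorem, every rational Hodge class of type $(1,1)$ on a smooth projective variety is a $\Q$-linear combination of classes of divisors, and sums and products of divisor classes are algebraic; so it is enough to prove that this algebra of invariants is generated, as a $\Q$-algebra, by its degree-$1$ part, and that those degree-$1$ invariants are themselves of Hodge type $(1,1)$.

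Now I would feed in Theorem~\ref{main}, by which $\Hdg(A)=\U(V,\phi_q)$, and quote the invariant theory of unitary groups of Hermitian forms over CM fields in the form packaged by Ribet, \cite[Theorem~0, p.~524; see also pp.~531--532]{Ribet3}: every invariant tensor for $\U(V,\phi_q)$ is built from the Hermitian form $\phi_q$ and its Galois conjugates by tensor operations, so the invariant algebra is generated in degree $1$; and the generating degree-$1$ invariants are identified there with genuine classes of type $(1,1)$ --- (Chern classes of) the canonical polarization of $A$ and its transforms under the endomorphisms in $\Q(\zeta_q)=\End^{0}(A)$, all of which are classes of divisors. With both requirements of the previous paragraph in hand, every Hodge class on $A^{m}$ is a $\Q$-linear combination of products of divisor classes; in particular it is algebraic, so the Hodge conjecture holds for $(J^{(f,q)})^{m}$.

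Essentially all of the content sits in Theorem~\ref{main}; beyond invoking it, the only things requiring care are the routine bookkeeping matching the Weil-restriction-of-scalars model of $\U(V,\phi_q)$ used above with the one in \cite{Ribet3}, and the check --- which is precisely the point of \cite[pp.~531--532]{Ribet3} --- that the bilinear generators coming out of unitary invariant theory are of Hodge type $(1,1)$ on the self-products, and not partly of type $(2,0)+(0,2)$. So the main difficulty is just citing that result correctly.
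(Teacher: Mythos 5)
Your proof takes essentially the same route as the paper, which deduces the corollary from Theorem~\ref{main} by citing exactly the two ingredients you identify --- Lefschetz's theorem on $(1,1)$-classes and the invariant theory of unitary groups in the form of \cite[Theorem~0, pp.~524, 531--532]{Ribet3} --- in a single sentence. Your additional unpacking (identifying Hodge classes on $A^m$ with $\Hdg(A)$-invariants in mixed tensors, checking that invariants are generated in degree one, and that the degree-one generators are of type $(1,1)$) is a correct reconstruction of what sits inside Ribet's cited result, not a different argument.
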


The paper is organized as follows. In Section \ref{MT} we discuss  Lie algebras
of Hodge groups of complex abelian varieties. Its main result, Theorem
\ref{hdgSS} (that may be of independent interest) asserts that under certain
conditions the semisimple part of the Hodge group (and its Lie algebra) is ``as
large as possible". We deduce Theorem \ref{main} from Theorem \ref{hdgSS},
using   auxiliary results from Section \ref {divintegral} concerning
divisibility properties of certain arithmetic functions.   In Section
\ref{linearLie} we discuss linear reductive Lie algebras. The last Section
contains the proof of Theorem \ref{hdgSS}.

\section{Complex abelian varieties}
\label{MT} Let $Z$ be a complex abelian variety of positive dimension
and let $\Omega^1(Z)$ be the $\dim(Z)$-dimensional complex vector space of
regular differential $1$-forms on $Z$.
We write $\CC_Z$ for the center of the semisimple finite-dimensional
$\Q$-algebra $\End^0(Z)$. We write $\H_1(Z,\Q)$ for its first rational homology
group. It is well known that $\H_1(Z,\Q)$ is a $2\dim(Z)$-dimensional
$\Q$-vector space.

The $\Q$-algebra $\End^0(Z)$ acts faithfully on $\H_1(Z,\Q)$. In particular, if
 $E$ is a subfield of $\End^0(Z)$ that contains the identity map then
 $\H_1(Z,\Q)$carries the natural structure of $E$-vector space of dimension
 $$d(Z,E)=\frac{2\dim(Z)}{[E:\Q]}.$$
Let $\Sigma_E$ be the set of all field embeddings  $\sigma: E \hookrightarrow
\C$. It is well-known that
$$\C_{\sigma}:=E\otimes_{E,\sigma}\C=\C, \quad E_{\C}=E\otimes_{\Q}\C=\prod_{\sigma\in \Sigma_E}
E\otimes_{E,\sigma}\C=\prod_{\sigma\in \Sigma_E}\C_{\sigma}.$$ If $\sigma\in
\Sigma_E$ then we write $\bar{\sigma}$ for the complex-conjugate of $\sigma$.
We write  $X_E$ for the $\Q$-vector space of functions $\phi: \Sigma_E \to \Q$
with
$$\phi(\bar{\sigma})+\phi(\sigma)=0 \ \forall \sigma\in \Sigma_E.$$
If $E/\Q$ is Galois then $X_E$ carries the natural structure of $\Gal(E/\Q)$-module.

Let $\Lie(Z)$ be the tangent space to the origin of $Z$; it is a
$\dim(Z)$-dimensional $\C$-vector space. By functoriality,
$\End^0(Z)$ and therefore $E$ act on $\Lie(Z)$ and therefore
provide $\Lie(Z)$ with a natural structure of
$E\otimes_{\Q}\C$-module. Clearly,
$$\Lie(Z)=\bigoplus_{\sigma\in
\Sigma_E}\C_{\sigma}\Lie(Z)=\oplus_{\sigma\in
\Sigma_E}\Lie(Z)_{\sigma}$$ where
$\Lie(Z)_{\sigma}:=\C_{\sigma}\Lie(Z)=\{x \in \Lie(Z)\mid
ex=\sigma(e)x \quad \forall e\in E\}$. Let us put
$n_{\sigma}=n_{\sigma}(Z,E)=\dim_{\C_{\sigma}}\Lie(Z)_{\sigma}=\dim_{\C}\Lie(Z)_{\sigma}$.
  It is well-known that the natural map
$\Omega^1(Z) \to \Hom_{\C}(\Lie(Z),\C)$ is an isomorphism. This
 allows us to define via duality the natural homomorphism
$E  \to \End_{\C}(\Hom_{\C}(\Lie(Z),\C))=\End_{\C}(\Omega^1(Z))$.
This provides $\Omega^1(Z)$ with a natural structure of
$E\otimes_{\Q}\C$-module in such a way that
$\Omega^1(Z)_{\sigma}:=\C_{\sigma}\Omega^1(Z)\cong
\Hom_{\C}(\Lie(Z)_{\sigma},\C)$. In particular,
$$n_{\sigma}=\dim_{\C}(\Lie(Z)_{\sigma})=\dim_{\C}(\Omega^1(Z)_{\sigma}) \eqno (1).$$

Let us consider the first complex homology group of $Z$
$$\H_1(Z,\C)=\H_1(Z,\Q)\otimes_{\Q}\C,$$
which is a $2\dim(Z)$-dimensional complex vector space. If $E$ is as above then
$\H_1(Z,\C)$ carries the natural structure of a free
$E_{\C}:=E\otimes_{\Q}\C$-module of rank $d(Z,E)$. We have
$$\H_1(Z,\C)=\bigoplus_{\sigma\in
\Sigma_E}\C_{\sigma}\H_1(Z,\C)=\oplus_{\sigma\in \Sigma_E}\H_1(Z,\Q)_{\sigma}$$
where $$\H_1(Z,\Q)_{\sigma}:=\C_{\sigma}\H_1(Z,\C)=\{x \in \H_1(Z,\C)\mid
ex=\sigma(e)x \quad \forall e\in E\}=\H_1(Z,\Q)\otimes_{E,\sigma}\C.$$ Clearly,
every $\H_1(Z,\Q)_{\sigma}$ is a $d(Z,E)$-dimensional $\C$-vector subspace that
is also a $E_{\C}$-submodule of $\H_1(Z,\C)$.

 There is a canonical Hodge decomposition (\cite[chapter 1]{Mumford},
\cite[pp.~52--53]{Deligne})
$$\H_1(Z,\C)=H^{-1,0} \oplus H^{0,-1}$$
where $H^{-1,0}=H^{-1,0}(Z)$ and  $H^{0,-1}=H^{0,-1}(Z)$ are mutually ``complex
conjugate" $\dim(Z)$-dimensional complex vector spaces. This splitting is
$\End^0(Z)$-invariant and the $\End^0(Z)$-module $H^{-1,0}$ is canonically
isomorphic to the commutative Lie algebra $\Lie(Z)$ of $Z$. This implies that
$H^{-1,0}$ and $\Lie(Z)$ are isomorphic as $E$-modules and even as
$E_{\C}$-modules.

Let
$$\f_H^{0}=\f_{H,Z}^{0}:\H_1(Z,\C) \to
\H_1(Z,\C)$$ be the $\C$-linear operator in $\H_1(Z,\C)$ defined as follows.
$$\f_H(x) =-\frac{1}{2}x \quad \forall \ x \in H^{-1,0}; \quad \f_H^{0}(x)=\frac{1}{2}x \quad
\forall \ x \in H^{0,-1}.$$ Clearly, $\f_H^{0}$ commutes with $\End^0(Z)$. In
particular, every $\H_1(Z,\Q)_{\sigma}$ is $\f_H^{0}$-invariant. More
precisely, the linear operator $\f_H^{0}:\H_1(Z,\Q)_{\sigma}\to
\H_1(Z,\Q)_{\sigma}$ is semisimple and its spectrum lies in the two-element set
$\{1/2, -1/2\}$. Taking into account that the $E_{\C}$-modules $H^{-1,0}$ and
$\Lie(Z)$ are isomorphic, we conclude that the multiplicity of eigenvalue
$-1/2$ is $n_{\sigma}=n_{\sigma}(Z,E)$ while the multiplicity of eigenvalue
$1/2$ is $d(Z,E)-n_{\sigma}(Z,E)$. Let $\bar{\sigma}: E \hookrightarrow \C$ be
the composition of $\sigma: E \hookrightarrow \C$ and the complex conjugation
$\C \to \C$. It is known (\cite{Deligne}, \cite{MZ}) that
$$n_{\sigma}+n_{\bar{\sigma}}=d(Z,E).$$
This implies that the multiplicity of eigenvalue $1/2$ is $n_{\bar{\sigma}}$.

We refer to \cite{Ribet3}, \cite[Sect. 6.6.1 and 6.6.2]{ZarhinIzv} for the
definition and basic properties of the Hodge group (aka special Mumford--Tate
group) $\Hdg=\Hdg_Z$  of (the rational Hodge structure $\H_1(Z,\Q)$ and of)
$Z$. Recall that $\Hdg$ is a connected reductive algebraic $\Q$-subgroup of
$\GL(\H_1(Z,\Q))$, whose centralizer in $\End_{\Q}(\H_1(Z,\Q))$ coincides with
$\End^0(Z)$. Let
$$\hdg=\hdg_Z \subset \End_{\Q}(\H_1(Z,\Q))$$ be the $\Q$-Lie algebra of
$\Hdg$; it is a reductive $\Q$-Lie subalgebra of $\End_{\Q}(\H_1(Z,\Q))$, its
natural representation in  $\H_1(Z,\Q)$ is completely reducible and its
centralizer there coincides with $\End^0(Z)$. Notice also that its
complexification
$$\hdg_{\C}=\hdg\otimes_{\Q}\C\subset
\End_{\Q}(\H_1(Z,\Q))\otimes_{\Q}\C=\End_{\C}(\H_1(Z,\C))$$ contains $\f_H^0$
\cite[Sect. 3.4]{XueZ}.

Suppose that $E=\End^0(Z)$ is a CM field.  Choose a polarization on $Z$. The
corresponding Rosati involution on $\End^0(Z)$ coincides with the complex
conjugation $e\mapsto \bar{e}$ on $E$. The polarization gives rise to the
 nondegenerate alternating $\Q$-bilinear form
$$\psi: \H_1(Z,\Q) \times \H_1(Z,\Q) \to \Q$$
such that
$$\psi(ex,y)=\psi(x,\bar{e}y) \ \forall x,y \in \H_1(Z,\Q); \ e\in E.$$
  Let
$$E^{+}=\{e\in E \mid \bar{e}=e\}$$
be the maximal totally real subfield of the  CM field $E$ and let
$$E_{-}=\{e\in E \mid \bar{e}=-e\}.$$
Pick a non-zero element $\alpha \in \Q(\zeta_q)_{-}$. Now the standard
construction (see, for instance, \cite[p. 531]{Ribet3}) allows us to define the
non-degenerate $E$-sesquilinear Hermitian form
$$\phi: \H_1(Z,\Q) \times \H_1(Z,\Q) \to E$$
such that
$$\psi(x,y)=\Tr_{E/\Q}(\alpha \phi(x,y)) \ \forall x,y \in
\H_1(Z,\Q).$$ We write $\U(\H_1(Z,\Q),\phi)$ for the unitary group of $\phi$ of
the $E$-vector space $\H_1(Z,\Q)$, viewed as an algebraic $\Q$-subgroup of
$\GL(\H_1(Z,\Q))$ (via Weil's restriction of scalars from $E^{+}$ to $\Q$
(ibid). It is well-known that $\U(\H_1(Z,\Q),\phi)$ is reductive and its
$\Q$-dimension is
$$[E^{+}:\Q] d(Z,E)^2= \frac{1}{2}[E:\Q] d(Z,E)^2.$$

Let $\u(\H_1(Z,\Q),\phi)$ be the $\Q$-Lie algebra of $\U(\H_1(Z,\Q),\phi)$: it
is a reductive $\Q$-Lie subalgebra of $\End_{\Q}(\H_1(Z,\Q))$.
Explicitly,
$$\u(\H_1(Z,\Q),\phi)=\{u\in \End_{E}(\H_1(Z,\Q))\mid
\phi(ux,y)+\overline{\phi(x,uy)}=0 \ \forall x,y \in \H_1(Z,\Q)\}.$$ The
reductive $\Q$-Lie algebra $\u(\H_1(Z,\Q),\phi)$ splits into a direct sum
$$\u(\H_1(Z,\Q),\phi)=E_{-}\oplus \su(\H_1(Z,\Q),\phi)$$
of its center $E_{-}$ and the semisimple $\Q$-Lie algebra
$$\su(\H_1(Z,\Q),\phi)=\{u\in \u(\H_1(Z,\Q),\phi)\mid \Tr_E(u)=0.\}$$
Here
$$\Tr_E: \End_{E}(\H_1(Z,\Q))\to E$$
is the trace map.  One may easily check that
$$\dim_{\Q}(\su(\H_1(Z,\Q),\phi))=\frac{1}{2}[E:\Q] \{d(Z,E)^2-1\}.$$

 Since the Hodge group respects the polarization and commutes with
endomorphisms of $Z$,
$$\Hdg(Z)\subset \U(\H_1(Z,\Q),\phi)$$
(ibid). This implies that
$$\hdg \subset \u(\H_1(Z,\Q),\phi)\subset \End_{\Q}(\H_1(Z,\Q)).$$
This implies that the semisimple part $\hdg^{ss}=[\hdg,\hdg]$ of $\hdg$ lies in
 $\su(\H_1(Z,\Q),\phi)$. In particular,
 $$\dim_{\Q}(\hdg^{ss}) \le \frac{1}{2}[E:\Q] \{d(Z,E)^2-1\};$$
 the equality holds if and only if $\hdg^{ss}=\su(\H_1(Z,\Q),\phi)$.

The following statement may be viewed as a partial generalization of Theorem 3
in \cite[p. 526]{Ribet3}.

\begin{thm}
\label{hdgSS} Suppose that $E=\End^0(Z)$ is a CM field. Assume that all
$n_{\sigma}(Z,E)$ are distinct positive integers. Assume additionally that
there exists a field embedding $\sigma: E \hookrightarrow \C$ such that
$n_{\sigma}(Z,E)$ and $d(Z,E)$ are relatively prime.

Then $\hdg^{ss}=\su(\H_1(Z,\Q),\phi)$.
\end{thm}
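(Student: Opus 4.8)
The plan is to reduce the statement about the semisimple Lie algebra $\hdg^{ss}$ to a purely Lie-theoretic assertion about how a reductive subalgebra of $\u(\H_1(Z,\Q),\phi)$ that contains the Hodge element $\f_H^0$ and has centralizer exactly $E$ must fill up $\su(\H_1(Z,\Q),\phi)$. First I would pass to the complexification $\hdg_\C \subset \u(\H_1(Z,\Q),\phi)_\C$, where the ambient algebra decomposes according to the CM structure: the involution $e \mapsto \bar e$ pairs up each embedding $\sigma$ with $\bar\sigma$, and on the corresponding pair of isotypic pieces $\H_1(Z,\Q)_\sigma \oplus \H_1(Z,\Q)_{\bar\sigma}$ (each of dimension $d := d(Z,E)$) the unitary Lie algebra becomes $\mathfrak{gl}_d(\C)$, so that $\u(\H_1(Z,\Q),\phi)_\C \cong \prod_{\{\sigma,\bar\sigma\}} \mathfrak{gl}_d(\C)$ and $\su(\H_1(Z,\Q),\phi)_\C \cong \prod_{\{\sigma,\bar\sigma\}} \mathfrak{sl}_d(\C)$. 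Under this identification, the Hodge element $\f_H^0$ projects, in the $\{\sigma,\bar\sigma\}$-factor, to an element of $\mathfrak{gl}_d(\C)$ that (by the eigenvalue computation recalled in the excerpt) is semisimple with eigenvalue $-1/2$ of multiplicity $n_\sigma$ and eigenvalue $1/2$ of multiplicity $n_{\bar\sigma} = d - n_\sigma$.

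The key mechanism is then the following. Let $\hdg^{ss}_\C$ project onto its $\{\sigma,\bar\sigma\}$-component $\g_\sigma \subset \mathfrak{sl}_d(\C)$; each $\g_\sigma$ is a semisimple linear Lie algebra acting on the $d$-dimensional space $\H_1(Z,\Q)_\sigma$. The plan is to prove that each $\g_\sigma$ is all of $\mathfrak{sl}_d(\C)$ and that no two distinct factors are "glued" by the graph of an isomorphism, so that $\hdg^{ss}_\C = \prod_{\{\sigma,\bar\sigma\}} \mathfrak{sl}_d(\C) = \su(\H_1(Z,\Q),\phi)_\C$, which descends to the desired $\Q$-statement. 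For the first point I would use the results on linear reductive Lie algebras promised in Section \ref{linearLie} together with the fact that the traceless part of $\f_H^0$ lies in $\hdg^{ss}_\C$ and hence its image lies in $\g_\sigma$: this image is a semisimple element with exactly two eigenvalues of multiplicities $n_\sigma$ and $d - n_\sigma$, and the hypothesis $\gcd(n_\sigma, d) = 1$ (for the distinguished $\sigma$) forces the centralizer considerations to pin down $\g_\sigma$ as $\mathfrak{sl}_d$ rather than some smaller simple algebra with a two-eigenvalue element — this is exactly the kind of rigidity that a classification of primitive linear Lie algebras containing a given semisimple element yields. Since $\End^0(Z) = E$ means the centralizer of $\hdg$ in $\End_\Q(\H_1(Z,\Q))$ is $E$, the action of $\g_\sigma$ on $\H_1(Z,\Q)_\sigma$ is irreducible; combined with the eigenvalue data and coprimality this should be enough.

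For the "no gluing" point — the step I expect to be the main obstacle — I would exploit the hypothesis that all the $n_\sigma(Z,E)$ are \emph{distinct}. If two factors indexed by $\{\sigma,\bar\sigma\}$ and $\{\tau,\bar\tau\}$ were identified via the graph of a Lie algebra isomorphism $\mathfrak{sl}_d \to \mathfrak{sl}_d$, then the semisimple element coming from $\f_H^0$ in one factor would have to correspond to that in the other; but an isomorphism of $\mathfrak{sl}_d$ is (up to conjugacy) either the identity or the contragredient $u \mapsto -u^T$, which would force $\{n_\sigma, d - n_\sigma\} = \{n_\tau, d - n_\tau\}$ as multiplicity sets, i.e. $n_\sigma \in \{n_\tau, d-n_\tau\}$. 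Ruling this out requires knowing that the $n_\sigma$ are not merely distinct but that one cannot have $n_\sigma = d - n_\tau$ either; here I would need to invoke more carefully that, together with the index shift $n_{\bar\sigma} = d - n_\sigma$, the full multiset $\{n_\sigma : \sigma \in \Sigma_E\}$ being a set of distinct integers closed under $n \mapsto d - n$ forces the would-be identification to be trivial — and once all the $\g_\sigma$ are full and pairwise non-glued, Goursat's lemma for semisimple Lie algebras gives $\hdg^{ss}_\C$ equal to the product, hence $\hdg^{ss} = \su(\H_1(Z,\Q),\phi)$ after descent. The remaining care is purely bookkeeping: checking that the $\Q$-forms match (both $\hdg^{ss}$ and $\su(\H_1(Z,\Q),\phi)$ are defined over $\Q$ and have equal complexifications, so they coincide).
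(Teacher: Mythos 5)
Your overall strategy — complexify, decompose the unitary algebra as a product of $\mathfrak{sl}_d(\C)$'s indexed by conjugate pairs $\{\sigma,\bar\sigma\}$, show each projection $\g_\sigma$ of $\hdg^{ss}_\C$ is full, and then rule out gluings via Goursat plus the distinctness of the $n_\sigma$ — matches the paper's proof (via Theorem~\ref{slT}, Lemma~\ref{serre}, Lemma~\ref{graph}, Lemma~\ref{slL}) quite closely, and your ``no gluing'' bookkeeping using the built-in pairing $n_{\bar\sigma}=d-n_\sigma$ is in fact a bit cleaner than the paper's maximal-subset argument with $\Pi$. However, there is a genuine gap in the step where you conclude that \emph{every} $\g_\sigma$ equals $\mathfrak{sl}_d(\C)$. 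The coprimality hypothesis $\gcd(n_\sigma,d)=1$ is available only for the single distinguished embedding $\tau$. For the other $\sigma$'s, irreducibility of $\g_\sigma$ on $V_\sigma$ together with the presence of a semisimple element with two eigenvalues of multiplicities $n_\sigma$ and $d-n_\sigma$ does \emph{not} force $\g_\sigma=\mathfrak{sl}_d$: for instance $\mathfrak{sp}_4\subset\mathfrak{sl}_4$ acts irreducibly on $\C^4$ and contains $\mathrm{diag}(1,1,-1,-1)$, a two-eigenvalue semisimple element with multiplicities $(2,2)$. So ``eigenvalue data and centralizer considerations'' alone cannot pin down $\g_\sigma$ when $\gcd(n_\sigma,d)>1$, and your phrase ``this should be enough'' is doing real work that it cannot do.

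The missing ingredient is the rationality of $\hdg^{ss}$: because $\hdg^{ss}$ is a $\Q$-Lie subalgebra and the projections $\g_\sigma$ are obtained by specializing the single $E$-subalgebra $E\,\hdg^{ss}\subset\End_E(\H_1(Z,\Q))$ at the various embeddings $\sigma$, the images $\g_\sigma\subset\End_\C(V_\sigma)$ all have the \emph{same} $\C$-dimension, namely $\dim_E(E\,\hdg^{ss})$ — this is exactly Remark~\ref{indep} of the paper. Once one knows $\g_\tau=\mathfrak{sl}_d$ (from Lemma~\ref{serre} applied at the distinguished $\tau$), it follows that $\dim_\C\g_\sigma=d^2-1$ for every $\sigma$, and since each $\g_\sigma$ is a semisimple subalgebra of $\mathfrak{sl}_d$, it must equal $\mathfrak{sl}_d$. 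With that step inserted, the rest of your argument (Goursat for pairs, then Ribet's lemma for many simple factors, then descent of the equality $\hdg^{ss}_\C=\su_\C$ to $\Q$) goes through and gives the stated theorem without the dimension-counting sandwich that the paper uses.
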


\begin{rem}
\label{inequal} Clearly, in the course of the proof of Theorem \ref{hdgSS}, it
suffices to check that
$$\dim_{\Q}(\hdg^{ss}) \ge \frac{1}{2}[E:\Q] \{d(Z,E)^2-1\}.$$

\end{rem}
We prove Theorem \ref{hdgSS} in Section \ref{PhdgSS}.

\begin{proof}[Proof of Theorem \ref{main}]
Let us put $Z=J^{(f,q)}$ and $E=\Q(\zeta_q)$. Clearly,
$$d(Z,E)=n-1.$$
 We know that
$\End^0(Z)=E=\Q(\zeta_q)$ \cite{ZarhinM,ZarhinMZ2} and if $\sigma=\sigma_i:
\Q(\zeta_q)\hookrightarrow \C$ is a field embedding that sends $\zeta_q$ to
$\zeta_q^{-i}$ with $1 \le i<q, \ (i,p)=1$ then $n_{\sigma}=[ni/q]$
\cite{ZarhinM,ZarhinPisa}. (Clearly, every field embedding
$\Q(\zeta_q)\hookrightarrow \C$ is of the form $\sigma_i$.) Since $n>q$, all
integers $[ni/q]$ are positive and distinct. Propositions \ref{primeint} and
\ref{qint} below imply that under our assumptions on $(p,q,n)$
there exists a positive integer $i<q$ such that $(i,p)=1$ and $[ni/q]$
and $n-1$ are relatively prime. This allows us to apply Theorem \ref{hdgSS} and
conclude that $\hdg^{ss}=\su(\H_1(Z,\Q),\phi)$. On the other hand, by a result
from \cite{XueZ}, the center of $\hdg$ coincides with $E_{-}$. This implies
that the reductive $\Q$-Lie algebra $\hdg$ coincides with the direct sum
$$E_{-}\oplus \su(\H_1(Z,\Q),\phi)=\u(\H_1(Z,\Q),\phi).$$
Now the connectedness of the Hodge group and the unitary group implies that
$\Hdg(Z)=\U(\H_1(Z,\Q),\phi)$, i.e.,
$$\Hdg(J^{(f,q)})=\U(\H_1(J^{(f,q)},\Q),\phi_q).$$
\end{proof}

\section{Divisibility properties of integral parts}
\label{divintegral}
\begin{prop}
\label{primeint}  Suppose that $p$ is a prime, $r$ a positive integer, $q=p^r$
and $n$ is a positive integer that is not divisible by $p$.

Suppose that one of the following conditions holds:

\begin{itemize}
\item[(i)] $q<n<2q$;

\item[(ii)] The prime $p$  is odd. In addition, either $p\nmid (n-1)$
or $n<2q$.
\end{itemize}

Then there exists an integer $i$ such that
$$1\le i \le q-1, \ (i,p)=1$$
and integers $[ni/p]$ and $n-1$ are relatively prime.
\end{prop}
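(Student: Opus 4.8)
The plan is to translate the claim into a congruence modulo $n-1$ and then to split according to whether $p\nmid n-1$ or $p\mid n-1$. The basic device is this: for an integer $i$ with $(i,p)=1$ put $c_i:=(ni\bmod p)\in\{1,\dots,p-1\}$ (it is nonzero because $p\nmid ni$), so that $p\,[ni/p]=ni-c_i$; since $ni\equiv i\pmod{n-1}$ this gives the key congruence $p\,[ni/p]\equiv i-c_i\pmod{n-1}$. Note also the elementary remark that $\gcd([ni/p],n-1)$ always divides $\gcd\bigl(p\,[ni/p],\,n-1\bigr)$, so it will be enough to arrange that $p\,[ni/p]$ be coprime to $n-1$.

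\emph{The case $p\nmid n-1$.} Here I would take $i$ to be the unique element of $\{1,\dots,p-1\}$ with $i\equiv-(n-1)^{-1}\pmod p$ (this uses $(n-1,p)=1$). Then $(n-1)i\equiv-1\pmod p$, i.e.\ $ni\equiv i-1\pmod p$; moreover $i\ne1$, since $i=1$ would force $n\equiv0\pmod p$, contrary to $p\nmid n$. Hence $i-1\in\{1,\dots,p-1\}$ and therefore $c_i=i-1$, so $p\,[ni/p]=ni-(i-1)=(n-1)i+1$. Consequently $\gcd([ni/p],n-1)$ divides $\gcd\bigl((n-1)i+1,\,n-1\bigr)=1$, and since $1\le i\le p-1\le q-1$ we are done in this case. (When $p=2$ this case is vacuous, as $p\nmid n$ and $p\nmid n-1$ cannot both hold; so this also covers the sub-case ``$p$ odd, $p\nmid n-1$'' of hypothesis (ii).) This is the cleanest step and I expect no trouble with it.

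\emph{The case $p\mid n-1$.} This is everything that remains: all of hypothesis (i), and the sub-case of (ii) in which $p\mid n-1$; in both situations $n<2q=2p^{r}$. Write $n-1=p^{a}m$ with $a\ge1$ and $(m,p)=1$. From $p^{a}\le p^{a}m=n-1<2p^{r}$ one reads off $a\le r$ and $m<2p^{\,r-a}$. Since $n\equiv1\pmod p$ we have $c_i=i\bmod p$, and parametrizing $\{\,i:1\le i\le q-1,\ (i,p)=1\,\}$ by $i=j+ps$ with $j\in\{1,\dots,p-1\}$ and $s\in\{0,\dots,p^{r-1}-1\}$, a direct computation using $p\mid n-1$ gives $[ni/p]=\frac{n-1}{p}\,j+ns$. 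For a prime $\ell\mid m$ one has $\ell\mid\frac{n-1}{p}$ and $n\equiv1\pmod\ell$, so $[ni/p]\equiv s\pmod\ell$; hence $\ell\mid[ni/p]$ exactly when $\ell\mid s$. For $\ell=p$: if $a\ge2$ then $p\mid\frac{n-1}{p}$, so $[ni/p]\equiv s\pmod p$ and $p\mid[ni/p]$ exactly when $p\mid s$; if $a=1$ then $[ni/p]\equiv mj+s\pmod p$, so $p\mid[ni/p]$ exactly when $s\equiv-mj\pmod p$. Thus it suffices to pick $s\in\{0,\dots,p^{r-1}-1\}$ with $(s,m)=1$ satisfying the relevant condition modulo $p$.

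\emph{Finishing, and the main obstacle.} If $a\ge2$, then $r\ge2$ and I take $s=1$, so $i=p+1$ works. If $a=1$ with $p$ odd, then again $r\ge2$ — for $r=1$ forces $m<2$, hence $m=1$, $n=p+1<2p$, where $i=1$ already works because $[n/p]=1$ — so I again take $s=1$ and choose $j\in\{1,\dots,p-1\}$ with $j\not\equiv-m^{-1}\pmod p$, which is possible because $p\ge3$. The one genuinely delicate situation is $a=1$ with $p=2$: then $j=1$ is forced and one needs $s$ even with $(s,m)=1$; the choice $s=2$ works as soon as $r\ge3$, while the residual low values $r\le2$ (under hypothesis (i), with $q<n<2q$ and $n$ odd, this leaves only a short explicit list of pairs $(n,q)$) have to be inspected by hand and may call for the kind of extra congruence restriction that distinguishes hypothesis (C) from (B) in Theorem \ref{main}. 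I expect this $p=2$, small-$r$ bookkeeping — together with the routine check that the degenerate value $n=1$ causes no problem — to be the only real point of difficulty; all the rest is the mechanical translation carried out above.
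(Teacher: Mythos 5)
Your argument is aimed at the literal quantity $[ni/p]$ that appears in the printed statement, but that is a misprint for $[ni/q]$: the paper's own proof computes $[ni/q]$ throughout (it opens with ``$[n\cdot 1/q]=[n/q]=1$'' when $q<n<2q$), and the application in the proof of Theorem \ref{main} requires precisely $\gcd([ni/q],n-1)=1$, since there $n_\sigma=[ni/q]$ and $d(Z,E)=n-1$. Your whole device rests on $p\,[ni/p]=ni-(ni\bmod p)$ with the remainder lying in the fixed small set $\{1,\dots,p-1\}$; at scale $q$ the analogous remainder $ni\bmod q$ ranges over $\{1,\dots,q-1\}$ with no comparable control, so neither your explicit formula $p\,[ni/p]=(n-1)i+1$ in the case $p\nmid n-1$, nor your parametrization $i=j+ps$ with $[ni/p]=\tfrac{n-1}{p}j+ns$ in the case $p\mid n-1$, transfers to $[ni/q]$.

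Worse, the $[ni/p]$ version you are proving is in fact false. Take $(n,p,q)=(7,2,4)$, which satisfies hypothesis (i) since $4<7<8$; the admissible $i$ are $1$ and $3$, and $[7/2]=3$, $[21/2]=10$ both share a factor with $n-1=6$. (For the intended target, $i=1$ gives $[7/4]=1$, coprime to $6$.) This is exactly the $p=2$, $a=1$, small-$r$ corner you flagged as ``to be inspected by hand''; the inspection fails, which should have been a red flag. The paper's proof operates at scale $q$: it disposes of $q<n<2q$ with $i=1$ and of $n<q$ by finding $\mu$ with $[\mu n/q]=1$, and in the core case sets $k=[n/q]$, $c=n-kq$, $d=c-1$, writes $[ni/q]=ik+[ci/q]$, chooses $i$ with $id\equiv 1\pmod q$, and uses a $2\times 2$ determinant (Cramer's rule) to rule out any common prime divisor of $n-1$ and $[ni/q]$. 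A correct proof needs to be rebuilt along those lines; the scale-$p$ reduction cannot capture it.
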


\begin{proof}
If $q<n<2q$ then
$$[n\cdot 1/q]=[n/q]=1$$
and we may take $i=1$.

So, further we assume that $p$ is odd and either $n<q$ or $n-1$ is not
divisible by $p$.

 If $q>n>q/2$ then $[2n/q]=1$ and we may take $i=2$.

If $0<n<q/2$ then there exists a positive integer $\mu$ such that
 $$ q\le \mu n<(\mu+1)n< 2q.$$
 Since $q$ is a power of $p$ and $n$ is not divisible by $p$,
$$ q< \mu n<(\mu+1)n< 2q.$$
Clearly,
$$1=[n\mu/q]=[n(\mu+1)/q].$$
So, we take as $i$ either $\mu$ or $\mu+1$, depending on which one is {\sl not}
divisible by $p$.

Now let us assume that $n-1$ is {\sl not} divisible by $q$. Let us put
$$k=[n/q], \ c=n-kq, \ d=c-1.$$ We have
$$c=d+1, \ n=qk+c, \ n-1=qk+d; \ 2 \le c \le q-1, 1 \le d \le q-2,$$
$$(c,p)=1, \ (d,p)=1.$$
Let $i$ be an integer such that $1\le i \le q-1$ and $(i,p)=1$. Put $j=[ci/q]$.
Clearly, $j$ is a nonnegative integer such that
$$qj < ic<(q+1)j.$$
(The first inequality holds, because neither $c$ nor $i$ are  divisible
by $p$.) In other words,
$$ 0< ic-qj<q.$$
In addition,
$$[ni/q]=[(kq+c)i/q]=ik+[ci/q]=ik+j.$$

 Suppose that $n-1$ and $[ni/q]$ are {\sl not} relatively prime. Then there
exists a prime $\ell$ that divides both $n-1$ and $[ni/q]$. This implies that
$q\cdot k+d\cdot 1=0$ in $\Z/\ell\Z$ and $i\cdot k+j\cdot 1=0$ in $\Z/\ell\Z$.
So, we get the homogeneous system of two linear equations over the field
$\Z/\ell\Z$ that admits the {\sl nontrivial} solution $(k,1)\ne (0,0)$. By
Cramer's rule, the determinant $id-qj$ is zero in $\Z/\ell\Z$, i.e., the
integer $id-qj$ is divisible by $\ell$. In particular, $id-qj\ne \pm 1$. So, we
prove the Proposition if we find such $i$ that $id-qj$ is either $1$ or $-1$.
 In order to do that, notice that
$$id-qj=i(c-1)-qj=(ic-qj)-i.$$
 This implies that
$$-i<id-qj<q-i.$$
Since $1\le i \le q-1$,
$$1-q<id-qj<q-1.$$
 Now if we choose $i$ in such a way that  $1\le i \le q-1$ and $i d$ is
congruent to $1$ modulo $q$ (such choice is possible, because $d$ is not
divisible by $p$) then $i d-q j$ is congruent to $1$ modulo $q$ and therefore
$id-qj=1$. In addition, the latter equality implies that $i$ is {\sl not}
divisible by $p$.
\end{proof}

Proposition \ref{primeint} admits the following (partial) generalization.

\begin{prop}
\label{qint} Suppose that $p$ is a prime, $r$ a positive integer, $q=p^r$
and $n$ is a positive integer that is not divisible by $p$. Suppose that
  $n-1$ is not divisible by $q$. If $p=2$, assume additionally that
  $q=2^r>2$ and $n\not\equiv q-1 \mod 2q$.

    Then there exists an integer $i$ such that
$$1\le i \le q-1, \ (i,p)=1$$
and integers $[ni/p]$ and $n-1$ are relatively prime.
\end{prop}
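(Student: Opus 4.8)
I adapt the argument in the proof of Proposition \ref{primeint}, carrying it through the range $p\mid(n-1)$ that was excluded there (and which is the only possibility when $p=2$, since then $n$ is odd). Keep the notation $k=[n/q]$, $c=n-kq$, $d=c-1$, so that $n=kq+c$ and $n-1=kq+d$; the hypotheses $p\nmid n$ and $q\nmid(n-1)$ give $2\le c\le q-1$, $1\le d\le q-2$ and $p\nmid c$. For $1\le i\le q-1$ with $(i,p)=1$ put $j=[ci/q]$ and $e=ic-qj$; since $p\nmid ic$ one has $e\in\{1,\dots,q-1\}$, and then $[ni/q]=ik+j$ and $qj-id=i-e$. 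As in Proposition \ref{primeint}, a prime $\ell$ dividing both $n-1$ and $[ni/q]$ must divide $i-e$; so it suffices to find admissible $i$ for which $i-e$ has no prime factor in common with $n-1$ other than $p$, and for which moreover $p\nmid[ni/q]$.

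Let $s$ be the exact exponent of $p$ in $n-1$ (equivalently in $d$); since $q\nmid(n-1)$, $0\le s\le r-1$. If $s=0$ --- automatic when $p$ is odd and $p\nmid n-1$ --- then $\gcd(d,q)=1$ and Proposition \ref{primeint}'s argument applies word for word: choosing $i$ with $id\equiv1\pmod q$ forces $e=i+1$, hence $i-e=-1$ and $\gcd(n-1,[ni/q])=1$. Assume therefore $s\ge1$, so $r\ge2$, and write $d=p^sd_0$ with $p\nmid d_0$. From $e\equiv ic\pmod q$ and $c-d=1$ one gets $i-e\equiv-ip^sd_0\pmod q$, whence $v_p(i-e)=s$ for \emph{every} admissible $i$; thus $i-e=\pm p^sw$ with $p\nmid w$, and $\gcd(n-1,[ni/q])$ is forced to be a power of $p$ times a divisor of $w$. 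The first step is to arrange $w=1$, i.e.\ $|i-e|=p^s$: because $|i-e|<q$, the congruence above shows $i-e=p^s$ whenever $i\equiv-d_0^{-1}\pmod{p^{r-s}}$ and $i>p^s$, and $i-e=-p^s$ whenever $i\equiv d_0^{-1}\pmod{p^{r-s}}$ and $i<q-p^s$; the inequality $p^r\ge p^s+p^{r-s}$ (valid for $1\le s\le r-1$) then guarantees an admissible $i$ of the first kind, and in fact at least two such $i$ unless $p=2$ and $c=q-1$. The second step exploits that multiplicity: for $i$ in the class $i\equiv-d_0^{-1}\pmod{p^{r-s}}$ one has $[ni/q]=ik+j$ with $j=(id_0+1)/p^{r-s}$, and replacing $i$ by $i+p^{r-s}$ changes $[ni/q]$ by $d_0\not\equiv0\pmod p$; so if two admissible such $i$ are available, $[ni/q]\bmod p$ takes at least two values, we may pick $i$ with $p\nmid[ni/q]$, and then $\gcd(n-1,[ni/q])=1$.

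This handles every case except the degenerate one $p=2$, $c=q-1$ (so $d=q-2$, forcing $s=1$), and here the extra hypothesis is exactly what is needed. In that case $n=q(k+1)-1$ and $d_0=2^{r-1}-1$, and the only admissible choices are $i=2^{r-1}+1$ (from the class $i\equiv-d_0^{-1}$) and $i=2^{r-1}-1$ (from the companion class $i\equiv d_0^{-1}$); a direct computation gives, for each of them, $i-e=\pm2=\pm p^s$ and $[ni/q]\equiv k\pmod2$. The condition $n\not\equiv q-1\pmod{2q}$ says precisely that $k+1$ is even, i.e.\ $k$ is odd, so $[ni/q]$ is odd; together with $i-e=\pm p^s$ this yields $\gcd(n-1,[ni/q])=1$, finishing the proof. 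The main obstacle is precisely this configuration: away from it the class $\{i\equiv-d_0^{-1}\bmod p^{r-s},\ i>p^s\}$ has enough members to remove the prime $p$ with no condition on $n$, but for $p=2$, $c=q-1$ it collapses to a single element and the congruence $n\not\equiv q-1\pmod{2q}$ becomes indispensable.
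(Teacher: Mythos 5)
Your proof is correct and follows the same overall strategy as the paper's: both isolate the $p$-part of $d=c-1$ (you write $d=p^sd_0$; the paper writes $t=(d,q)=p^s$, $d'=d_0$, $q'=p^{r-s}$), reduce the divisibility question via Cramer's rule to controlling a $2\times 2$ determinant, consider two values of $i$ differing by $p^{r-s}$ so that the resulting $[ni/q]$ differ by $d_0\not\equiv 0\pmod p$, and handle the exceptional configuration $p=2$, $c=q-1$ (equivalently $q\mid n+1$) by a direct computation in which the hypothesis $n\not\equiv q-1\pmod{2q}$ forces the relevant $[ni/q]$ to be odd. The one substantive difference is the sign convention: you take $i\equiv -d_0^{-1}\pmod{p^{r-s}}$, which gives $e=ic-q[ci/q]=i-p^s$ and therefore requires $i>p^s$; the paper instead takes $i\equiv d_0^{-1}\pmod{p^{r-s}}$, which gives $e=i+p^s$ and therefore requires $i$ small. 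The paper then picks the \emph{unique} such $i$ in $\{1,\dots,p^{r-s}-1\}$ together with $i+p^{r-s}$, and proves in a short separate lemma (Lemma~\ref{t3}) that $t+i+q'<q$, which simultaneously guarantees both values are admissible; the lemma's proof is a clean inequality chase except in the case $p=2$, $t=2$, where it invokes the extra hypothesis.

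Where your write-up has a gap is the unproved assertion that ``the inequality $p^r\ge p^s+p^{r-s}$ \dots guarantees \dots at least two such $i$ unless $p=2$ and $c=q-1$.'' The inequality you cite only shows the class $\{i\equiv -d_0^{-1}\pmod{p^{r-s}}:\ p^s<i\le q-1\}$ is nonempty; getting two elements (and in fact two \emph{consecutive} ones, which is what the parity-switching step actually uses) needs an additional count. The claim is true: writing the class in $\{1,\dots,q-1\}$ as $\{a+mp^{r-s}:0\le m\le p^s-1\}$ with $1\le a\le p^{r-s}-1$, the bad $m$'s (those with $a+mp^{r-s}\le p^s$) number at most $\lfloor(p^s-1)/p^{r-s}\rfloor+1$, and one checks this is $\le p^s-2$ for all $(p,s,r)$ with $1\le s\le r-1$ except $p=2$, $s=1$, $a=1$, which forces $d_0=q/2-1$ and hence $c=q-1$; since the good $m$'s form an interval, two of them are automatically consecutive. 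You should include some version of this verification (or adopt the paper's device, taking the minimal $i\in\{1,\dots,p^{r-s}-1\}$ in the opposite residue class together with $i+p^{r-s}$, which sidesteps the counting). With that filled in, the argument is complete and, as you note, cleanly explains why the degenerate configuration is precisely the one needing the extra congruence hypothesis.
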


\begin{rem}
\label{rempower2} If $q=p=2$ then every odd $n$ is congruent to $1$ modulo $2$.

If $p=2, q=4$ then $(n,q)$ satisfy the conditions of Proposition \ref{qint} if
and only if $n-7$ is divisible by $8$.
\end{rem}

\begin{proof}[Proof of Proposition \ref{qint}]
As in the proof of Proposition \ref{primeint}, let us put
\[ k=[n/q],\quad c=n-kq,\quad d=c-1.\]
We have
\begin{equation}
  \label{eq:2}
 (c,p)=1, \quad 2\leq c\leq q-1, \quad 1\leq d \leq q-2.
\end{equation}

We are given that $q$ does not divide $d$. However, in the light of Proposition
\ref{primeint}, we may and will assume that $p$ divides $d$; in particular,
$q>p$ and $d \ge p \ge 2>1$.

Let \[t=(d, q), \quad d^{\prime}=d/t, \quad q^{\prime}=q/t.\] Then
$$q^{\prime}>1, \ t>1, \ (d^{\prime},q^{\prime})=1$$ and both
$t$ and $q^{\prime}$ are powers of $p$. This implies that
$$(d^{\prime},p)=1, \ t \ge p\ge 2, \ q^{\prime}\ge p\ge 2 >1.$$

Since $q^{\prime}>1, \ d^{\prime}\ge 1$ and $(d^{\prime},q^{\prime})=1$, there
exists a unique pair of integers $(i,j)$ such that
\[ d^{\prime}i - q^{\prime}j=1, \quad 0<i\leq q^{\prime}-1, \quad  j\geq 0.\]
Clearly, $(i,p)=1$ and therefore $(i+q^{\prime},p)=1$, because $q^{\prime}$ is
a power of $p$. Since $t\geq 2$, we have $i+q^{\prime}< 2q^{\prime}\leq
tq^{\prime}=q$.

We will treat the case $p=2$, $n\equiv -1 \mod q$ and $n\not\equiv q-1 \mod 2q$
separately at the end. So we further assume that if $p=2$, then $n+1$ is not
divisible by $q$. We will show that either $i$ or $i+q^{\prime}$ is the desired
integer, i.e., either $[ni/q]$ and $n-1$ are relatively prime or
$[n(i+q^{\prime})/q]$ and $n-1$ are relatively prime.  It is convenient to
consider both integers $[ni/q]$ and $[n(i+q^{\prime})/q]$ as $[n(i+\epsilon
q^{\prime})/q]$ with $\epsilon=0,1$.

 For every nonnegative integer $\epsilon$ we have
\begin{equation}
  \label{eq:5}
d^{\prime}(i+ \epsilon q^{\prime})-q^{\prime}(j+ \epsilon d^{\prime})=1.
\end{equation}

Multiplying both side of (\ref{eq:5}) by $t$, we get
\begin{equation}
  \label{eq:3}
d(i+ \epsilon q^{\prime})-q(j+ \epsilon d^{\prime})=t.
\end{equation}
 Since $c=d+1$, it follows that
$$\frac{c(i+ \epsilon q^{\prime})}{q}=\frac{(d+1)(i+ \epsilon q^{\prime})}{q}=j+ \epsilon d^{\prime}+\frac{t+i+ \epsilon
q^{\prime}}{q}$$ and therefore
\begin{equation}
  \label{eq:4}
  \left[\frac{c(i+ \epsilon q^{\prime})}{q}\right]-(j+ \epsilon d^{\prime})=\left[\frac{t+i+ \epsilon q^{\prime}}{q}\right].
\end{equation}

The following Lemma will be proven at the end of this Section.

\begin{lem}
\label{t3} We keep the assumptions of Proposition \ref{qint}. If $p=2$, we
assume additionally that $q\nmid n+1$. Then $[(t+i+q^{\prime})/q]=0$ and
therefore $[(t+i)/q]=0$.
 \end{lem}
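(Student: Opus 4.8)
The plan is to bound $t+i+q^{\prime}$ from above and show it is strictly less than $q$, which immediately gives $[(t+i+q^{\prime})/q]=0$ (since all quantities are positive), and then $[(t+i)/q]=0$ follows a fortiori since $t+i<t+i+q^{\prime}$. Recall the quantities: $t=(d,q)$, $q^{\prime}=q/t$, with $t\ge p$ and $q^{\prime}\ge p$ powers of $p$, and $0<i\le q^{\prime}-1$. Hence $t+i+q^{\prime}\le t+(q^{\prime}-1)+q^{\prime}=t+2q^{\prime}-1$. The crux is therefore to show $t+2q^{\prime}-1<q=tq^{\prime}$, i.e.\ $t+2q^{\prime}\le tq^{\prime}$, i.e.\ $(t-2)(q^{\prime}-2)\ge 2$. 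When $p$ is odd we have $t\ge 3$ and $q^{\prime}\ge 3$, so $(t-2)(q^{\prime}-2)\ge 1\cdot 1$; in fact $t\ge 3, q^{\prime}\ge 3$ gives the crude bound $t+2q^{\prime}-1 < tq^{\prime}$ directly (indeed $t+2q^{\prime}-1\le 3q^{\prime}-1<3q^{\prime}\le tq^{\prime}$ when $q^{\prime}\ge 3$ would need care — better to argue $t+2q^{\prime}\le tq^{\prime}\iff t(q^{\prime}-1)\ge 2q^{\prime}\iff t\ge 2q^{\prime}/(q^{\prime}-1)$, and $2q^{\prime}/(q^{\prime}-1)\le 3$ for $q^{\prime}\ge 3$, so $t\ge 3$ suffices). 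So the odd-$p$ case is essentially a clean inequality.

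The genuine obstacle is $p=2$, where the estimate $t\ge 3$ fails: we may have $t=2$ or $q^{\prime}=2$. I plan to dispatch these two sub-cases using the extra hypotheses of Proposition \ref{qint} together with the assumption $q\nmid n+1$ stated in the Lemma. If $q^{\prime}=2$ then $t=q/2$, and $d^{\prime}i-q^{\prime}j=1$ with $0<i\le q^{\prime}-1=1$ forces $i=1$, $d^{\prime}=2j+1$; then $t+i+q^{\prime}=t+3=q/2+3$, and $q/2+3<q$ holds as soon as $q>6$, i.e.\ $q\ge 8$, leaving only $q=4$. For $q=4$ we are in the situation $q^{\prime}=t=2$, handled next. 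If $t=2$ then $d$ is even but $4\nmid d$; since $d=n-kq-1$ and $q$ is divisible by $4$ (we are in the case $q>p$, so $q\ge 4$ when it occurs with $t=2<q$... actually $t=2,q^{\prime}=q/2$), write $d=2d^{\prime}$ with $d^{\prime}$ odd, $q^{\prime}=q/2$; here $t+i+q^{\prime}=2+i+q/2\le 2+(q/2-1)+q/2=q+1$, which is \emph{not} automatically $<q$ — precisely the boundary case $i=q^{\prime}-1$ must be excluded. This is where the hypotheses enter: I will show that $i=q^{\prime}-1$ together with $t=2$ forces $n\equiv q-1\pmod{2q}$ (the excluded congruence) or $q\mid n+1$ (excluded in the Lemma's hypothesis). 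Indeed, from $d^{\prime}i-q^{\prime}j=1$ with $i=q^{\prime}-1$ we get $d^{\prime}(q^{\prime}-1)\equiv 1\pmod{q^{\prime}}$, so $-d^{\prime}\equiv 1$, i.e.\ $d^{\prime}\equiv -1\pmod{q^{\prime}}$; since $0<d^{\prime}$ and $d^{\prime}\le \dots$, tracking this back through $d=2d^{\prime}$, $c=d+1$, $n=kq+c$ should pin down $n\bmod 2q$ to the forbidden residues. The bookkeeping here — converting the congruence on $d^{\prime}$ into a congruence on $n$ modulo $2q$ and matching it against ``$n\equiv q-1\bmod 2q$'' or ``$q\mid n+1$'' — is the step I expect to be fiddliest and where sign/range conventions must be checked carefully.

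After the case analysis, the conclusion is uniform: in every allowed case $t+i+q^{\prime}<q$, hence $0\le t+i<t+i+q^{\prime}<q$, so both $[(t+i+q^{\prime})/q]$ and $[(t+i)/q]$ vanish. I would close by remarking that this is exactly what is needed to make the right-hand side of \eqref{eq:4} equal zero for the relevant $\epsilon$, feeding back into the proof of Proposition \ref{qint}. The main obstacle, to restate, is not any deep idea but the careful elimination of the two $p=2$ boundary sub-cases $t=2$ and $q^{\prime}=2$ using precisely the congruence conditions that were built into the statements of Proposition \ref{qint} and Lemma \ref{t3}; everything else is a one-line inequality.
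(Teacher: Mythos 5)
Your proposal takes essentially the same route as the paper: bound $t+i+q'$ strictly below $q$, with the crude inequality handling $t,q'\ge 3$ (and, for $p=2$, $t\ge 4$), and eliminate the boundary case $t=2$, $i=q'-1$ by invoking the arithmetic hypothesis. For what it's worth, the bookkeeping you flagged as fiddly resolves more cleanly than you anticipate: since $d\le q-2$ and $t=2$ give $1\le d'\le q'-1$, the congruence $d'\equiv -1\pmod{q'}$ forces $d'=q'-1$, hence $c=d+1=2d'+1=q-1$ and $n=kq+c\equiv -1\pmod q$, i.e.\ $q\mid n+1$ outright — the mod-$2q$ exclusion never needs to enter this step.
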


Let us assume that either $p$ is odd or $p=2$ and $n+1$ is {\sl not} divisible
by $q$.
 Combining Lemma \ref{t3} with (\ref{eq:4}), we conclude that
\begin{equation}
\label{eq:41}
 \left[\frac{c(i+ \epsilon q^{\prime})}{q}\right]=(j+ \epsilon d^{\prime})
 \end{equation}
 if $\epsilon=0$ or $1$.
It follows  that if $\epsilon=0$ or $1$ then $$[n(i+\epsilon q^{\prime})/q]=
[(kq+c)(i+\epsilon q^{\prime})/q]= k(i+\epsilon q^{\prime})+[c(i+\epsilon
q^{\prime})/q]=k(i+\epsilon q^{\prime})+(j+\epsilon d^{\prime}).$$

Now suppose that $n-1$ and $[n(i+\epsilon q^{\prime})]/q]$ are \textit{not}
relatively prime for some  $\epsilon=0$ or $1$. Then there exists a prime
$\ell$ that divides both $n-1$ and $[n(i+\epsilon q^{\prime})/q]$. This implies
that $q\cdot k+ d\cdot 1=0 $ in $\Z/\ell \Z$ and $(i+\epsilon q^{\prime})\cdot
k+(j+ \epsilon d^{\prime})\cdot 1=0 $ in $\Z/\ell \Z$. So we get the
homogeneous system of linear equations over the field $\Z/\ell\Z$ with {\sl
determinant}
$$d(i+\epsilon
q^{\prime})-q(j+ \epsilon d^{\prime})=t$$ (by (\ref{eq:3})), which admits a
non-trivial solution $(k,1)\neq (0,0)$. By Cramer's rule the {\sl determinant}
$t$ is zero in $\Z/\ell\Z$, i.e., $\ell \mid t$. Since $t$ is a power of $p$,
we conclude
 that $\ell=p$.

Since $(i+\epsilon q^{\prime})\cdot k+(j+ \epsilon d^{\prime})\cdot 1=0 $ in
$\Z/\ell \Z$ and $\ell=p$, the integer $(i+\epsilon q^{\prime})\cdot k+(j+
\epsilon d^{\prime})$ is divisible by $p$.

Now suppose that $n-1$ and $[n(i+\epsilon q^{\prime})/q]$ are \textit{not}
relatively prime for both $\epsilon=0$ and $1$. This implies that both integers
$i\cdot k+j$ and $(i+ q^{\prime})\cdot k+(j+d^{\prime})$ are divisible by $p$.
Therefore their difference $q^{\prime}k+d'$ is also divisible by $p$, which is
not the case, because $q^{\prime}$ is a power of $p$ while $(d^{\prime},p)=1$.
The obtained contradiction proves Proposition when either $p$ is odd or $p=2$
and $n+1$ is {\sl not} divisible by $q$.

At last, let us treat the remaining case when
 $p=2, q=2^r$ with $r \ge 2$, the integer  $n+1$ is divisible by $q$ but $n\not\equiv q-1\mod 2q$.
 Then $q$ divides
 $n+1$ and the ratio $k:=(n+1)/q$ is an even integer. We have $n=2^rk-1$.
Let us put  $i=2^{r-1}-1$. Since $r \ge 2$, the integer $i$ is odd. We have
\[  \left[\frac{ni}{q}\right]=\left[\frac{(2^rk-1)(2^{r-1}-1)}{2^r}\right]=(2^{r-1}-1)k-1. \]
It follows that
\[ [ni/q]\equiv -1 \mod k. \]
In particular, $[ni/q]$ is odd, since  $k$ is even. Notice that
\[(n-1)/2=[ni/q]+k. \] Combining all those assertions, we get
\[\begin{split}
 (n-1, [ni/q])&=((n-1)/2, [ni/q])=([ni/q]+k, [ni/q]) \\
&=(k,[ni/q])=(k,-1)=1.
\end{split}\]

\end{proof}

\begin{proof}[Proof of Lemma \ref{t3}]
Recall that $q^{\prime}$ and $t$ are powers of $p$. This implies that $t \ge
p\ge 3$ if $p$ is odd; if $p=2$ then either $t=2$ or $t \ge 4$.

First, let us assume that $p$ is odd and therefore
 $t\geq p \ge 3$ and $q^{\prime} \geq p\ge 3$.  It follows that
\[ q^{\prime}(t-2) \geq 3(t-2)= 3t-6 \geq t. \]
Since $i\leq q^{\prime}-1$,
\[q=tq^{\prime} \geq t+2q^{\prime}>t+i+q^{\prime}.\]
This implies that $[(t+i+q^{\prime})/q]=0$.

Second, assume that $p=2$ and $t\ge 4$. Then $q^{\prime}\ge 2$ and therefore
$$q^{\prime}(t-2) \geq 2(t-2)=2t-4 \ge t.$$
As above, $i\leq q^{\prime}-1$,
\[q=tq^{\prime}=(t-2)q^{\prime}+2q^{\prime} \geq t+2q^{\prime}>t+i+q^{\prime}\]
and therefore $[(t+i+q^{\prime})/q]=0$.

Third, assume that $p=2$ and  $t=2$. Then
$$t=2, \ d=2d^{\prime}, \ q^{\prime}=\frac{q}{2}=2^{r-1}.$$  Recall
that $(i, p)=1$, i. e.,  $i$ is odd. Since $i\le q^{\prime}-1$, the sum
\[t+i+q^{\prime}=2+i+2^{r-1}\] is greater or equal than $q=2^r$ only if $i=q^{\prime}-1=2^{r-1}-1$.
By (\ref{eq:5}), $d^{\prime}\cdot i\equiv 1 \mod q^{\prime}$. If
$i=q^{\prime}-1$, then $d^{\prime}=q^{\prime}-1$, since $1\leq d^{\prime}
<q^{\prime}$. This implies that
\[c=d+1=2d^{\prime}+1=2q^{\prime}-1=q-1,\] which contradicts the assumption that
$$n+1=(kq+c)+1=kq+(c+1)$$ is {\sl not} divisible by $q$. So, this case does
{\sl not} occur.

\end{proof}

\section{Linear reductive Lie algebras}
\label{linearLie} Throughout this Section, $Q$ is a field of characteristic
zero, $C$ is an algebraically closed field containing $Q$. If $W$ is a
$Q$-vector space (resp. $Q$-algebra or $Q$-Lie algebra) then we write $W_C$ for
$W\otimes_Q C$ provided with the natural structure of a $C$-vector space (resp.
$C$-algebra or $C$-Lie algebra).

Let $W$ be a nonzero finite-dimensional $Q$-vector space. Let $E \subset
\End_Q(W)$ be a subfield that contains the scalars $Q\cdot\I_W$. Then $E/Q$ is
a finite algebraic extension and $W$ carries the natural structure of
$E$-vector space; in addition,
$$\dim_Q(W)=[E:Q] \cdot \dim_E(W).$$

We write $\Sigma$ for the set of all $Q$-linear field embedding
$\sigma:E\hookrightarrow  C$. If $\sigma \in \Sigma$ then
 we write $W_{\sigma}$ for the $C$-vector
space $W\otimes_{E,\sigma}C$; clearly,
$$\dim_{C}(W_{\sigma})= \dim_E(W);$$
there are natural surjective homomorphisms $W_C \twoheadrightarrow W_{\sigma}$.
 Their ``direct sum"
$$W_C \to \oplus_{\sigma\in\Sigma} W_{\sigma}$$
is an isomorphism of $C$-vector spaces, so one may view every $W_{\sigma}$ as a
$C$-vector subspace (direct summand) of $W_C$.

\begin{rem}
\label{indep}
 Let $S$ be a $Q$-vector subspace of $W$. Let $E S\subset W$ be the $E$-vector subspace
 of $W$ generated by $S$, i.e., $E S$ is the set of all linear combinations of
 elements of $S$ with coefficients in $E$.
 Clearly, $E S=W$ if and only if $S$ contains a basis of the $E$-vector space
 $W$. It is also clear that the image of
 the composition $$S_C \subset W_C \twoheadrightarrow W_{\sigma}$$
 coincides
with
$$\{ES\}_{\sigma}=ES \otimes_{E,\sigma}C\subset W\otimes_{E,\sigma}C=
W_{\sigma};$$ in particular, the $C$-dimension of this image coincides with
$\dim_E (ES)$ and does not depend on the choice of $\sigma$.
\end{rem}

\begin{rem}
\label{irred} Let $\k \subset \End_Q(W)$ be a reductive $Q$-Lie subalgebra such
that the natural representation of $\k$ is completely reducible and the
centralizer $\End_{\k}(W)$ of $\k$ in $\End_Q(W)$ coincides with $E$; in
particular, the $\k$-module $W$ is simple and $\k \subset \End_E(W)$. Clearly,
$E\k$ is a reductive $E$-Lie subalgebra of $\End_E(W)$ and the centralizer of
$E\k$ in $\End_E(W)$ coincides with $E$. In other words, the $E$-vector space
$W$ is an absolutely simple $E\k$-module.This implies that the $C$-vector space
$W_{\sigma}$ is an absolutely simple $\{E\k\}_{\sigma}$-module. However,
applying Remark \ref{indep} to the $E$-vector space $\End_E(W)$ (instead of
$W$) and its $Q$-vector subspace $S=\k$, we conclude that the $C$-Lie
subalgebra $\{E\k\}_{\sigma}\subset \End_{C}(W_{\sigma})$ is the image of
$$\k_C \to \End_E(W) \otimes_{E,\sigma}C=\End_C(W_{\sigma}).$$ This implies
that the $\k_C$-module $W_{\sigma}$ is also absolutely simple.  On the other
hand, the reductiveness of $\k$ and the equality $\End_{\k}(W)=E$ imply that
$\k$ splits into the direct sum
$$\k=\k^{ss}\oplus \c$$
of its center $\c$ and the semisimple $Q$-Lie algebra $\k^{ss}=[\k,\k]$; in
addition, $\c\subset E$. This implies that $E\k=E\k^{ss}\oplus E\c$ where $E\c$
is either $E$ or $\{0\}$. It follows easily that in both cases the
$\k^{ss}_C$-module $W_{\sigma}$ remains  absolutely simple and the image of
$\k^{ss}_C$ in $\End_C(W_{\sigma})$ coincides with
$$\{E\k^{ss}\}_{\sigma}=E\k^{ss}\otimes_{E,\sigma}C=[\{E\k\}_{\sigma},\{E\k\}_{\sigma}].$$

\end{rem}

The following statement is a variant of \cite[Sect. 4, Prop. 5]{SerreLF}

\begin{lem}
\label{serre} Let $W$ be a finite-dimensional $C$-vector space of positive
dimension, let $\sL(W)$ be the Lie algebra of traceless linear operators in
$W$. Let $\g \subset \End_C(W)$ be an irreducible semisimple linear $C$-Lie
subalgebra. Assume that there exists a diagonalizable operator
$$f \in \g \subset \End_C(W)$$
that enjoys the following property: $f$ acts in $W$ as a linear operator with
exactly two eigenvalues, whose multiplicities are relatively prime.

Then $\g=\sL(W)$.
\end{lem}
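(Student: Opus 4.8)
The plan is to reduce to a classification of irreducible linear Lie algebras containing an element with a ``lopsided'' spectrum, following Serre's strategy. First I would observe that $\g$, being semisimple and irreducible on $W$, decomposes $W_C = W$ (here $C$ is already algebraically closed) as an external tensor product $W_1 \otimes \cdots \otimes W_m$ of irreducible modules over the simple ideals $\g_1, \ldots, \g_m$ of $\g$, and that $\g \subset \sL(W)$ automatically since a semisimple Lie algebra has no nontrivial characters, so its image in $\End_C(W)$ lands in traceless operators. The goal is then to show $m=1$, that the representation of the single simple factor is the standard one (or its dual), and that $\g_1 = \sll(W)$ rather than a proper subalgebra like $\sp(W)$ or $\mathfrak{so}(W)$.

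The key device is the diagonalizable element $f \in \g$ whose two eigenvalue multiplicities $a$ and $b$ satisfy $(a,b)=1$ and $a+b = \dim_C W$. The plan is to write $f = f_1 \otimes 1 \otimes \cdots + 1 \otimes f_2 \otimes \cdots + \cdots$ with each $f_j$ in (a Cartan subalgebra of) $\g_j$, so that the eigenvalues of $f$ on $W$ are the sums $\lambda^{(1)}_{k_1} + \cdots + \lambda^{(m)}_{k_m}$ of eigenvalues of the $f_j$ on the $W_j$. If $m \geq 2$, I would argue that the multiset of such sums, having at most two distinct values, forces a near-triviality: one shows (by a counting/convexity argument on the eigenvalue lattice, exactly as in Serre's Proposition 5) that all but at most one $f_j$ acts as a scalar on $W_j$, hence — because $f_j$ lies in the semisimple $\g_j$ and is thus traceless — acts as $0$; if more than one $f_j$ were scalar-but-nonzero this is impossible, and tracelessness kills them, so effectively $f$ is concentrated in a single tensor slot. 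Then irreducibility of $\g$ on $W$ combined with $f$ having full support forces $m=1$. For the single simple $\g = \g_1$ acting irreducibly on $W$ with an element having exactly two eigenvalues: the highest weight must be minuscule or very small, and the coprimality condition $(a,b)=1$ together with the symmetry of weight multiplicities under the Weyl group (which would tend to make $a=b$ or to introduce a third eigenvalue for most fundamental weights in $B_n, C_n, D_n, E_n, F_4, G_2$ and for non-standard fundamentals of $A_n$) rules out everything except $\g = \sll(W)$ acting by the standard representation or its dual, where indeed $f = \mathrm{diag}$ with two blocks realizes any coprime $(a,b)$.

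The main obstacle I expect is the case analysis for a single simple factor: one must check, for each simple type and each dominant weight whose representation could conceivably have an element with only two eigenvalues, that either a third eigenvalue is unavoidable or the two multiplicities share a common factor (typically both even, or both divisible by the dimension of some sub-block), thereby contradicting $(a,b)=1$. Concretely, for $\sp(W)$ and $\mathfrak{so}(W)$ the standard representation's ``two-eigenvalue'' elements come in $\pm$ pairs forcing $a=b$; for wedge powers and spin representations the multiplicities are binomial-type coefficients that are not coprime in the relevant range; and the tensor-product (non-simple) case needs the convexity lemma above to be stated cleanly. I would lean on \cite[Sect. 4, Prop. 5]{SerreLF} for the structure of the argument and only supply the extra bookkeeping needed because our $f$ is merely required to have two eigenvalues with coprime multiplicities rather than Serre's specific normalization; once $m=1$ and $\g=\sll(W)$ are established, the conclusion $\g = \sL(W)$ is immediate.
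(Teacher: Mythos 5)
Your proposal takes a genuinely different route from the paper's, and it leaves the decisive step undone. On the reduction to $\g$ simple: you argue via convexity of the eigenvalue lattice that at most one $f_j$ in the tensor decomposition can be non-scalar, and that scalar implies zero by tracelessness. That is fine, but the closing sentence ``irreducibility of $\g$ on $W$ combined with $f$ having full support forces $m=1$'' is not an argument; if $m\ge 2$ and $f_j=0$, $f$ precisely does \emph{not} have full support, and irreducibility of the tensor product is not what kills it. What actually finishes this step (and is what the paper records) is the divisibility observation: if $f_j=0$ then both eigenvalue multiplicities of $f$ are divisible by $\dim_C U_j>1$, contradicting coprimality; hence every $f_j\ne 0$, so $f$ lies in no proper ideal, and your convexity count then shows $m=1$. (The paper instead quotes Theorem~1.5 of \cite{ZarhinChicago}, which upgrades ``$f$ not in a proper ideal'' plus ``$f$ has only two eigenvalues'' directly to simplicity of $\g$, so it never needs the convexity count at all.)

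Where the proofs really diverge is the simple case, and that is where the substantive gap lies. You propose classifying all pairs (simple Lie algebra, irreducible module) that admit a semisimple element with exactly two eigenvalues of coprime multiplicities---minuscule weights, Weyl-group symmetry, binomial bookkeeping for wedge powers and spin representations---and you explicitly flag this as ``the main obstacle'' without carrying it out. The paper sidesteps the whole classification with a short algebraic-group argument. After rescaling so that the two eigenvalues are $m$ and $-n$ with multiplicities $n$ and $m$ and $h=n+m=\dim_C W$, it introduces the torus $\Theta=\{A(u,v)\mid u^{n}v^{m}=1\}\subset\GL(W)$ (acting by $u$ on $W_1$ and $v$ on $W_2$), shows $\Theta\cong\G_m$ with $\Lie(\Theta)=C\cdot f$---this is exactly where coprimality of $n$ and $m$ is used, to invert $z\mapsto(z^m,z^{-n})$ via $A(u,v)\mapsto u^av^b$ with $ma-nb=1$---and observes that $\mu_h\cdot\I\subset\Theta$. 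Taking the connected simple algebraic subgroup $\GG\subset\GL(W)$ with $\Lie(\GG)=\g$, one gets $\Theta\subset\GG$ and hence $\mu_h$ inside the center of $\GG$; \cite[Sect.~4, Lemma~2]{SerreLF} then gives $\GG=\SL(W)$ and $\g=\sL(W)$ in one stroke, uniformly over all types and weights. Your plan is plausible and very much in the spirit of Serre's Proposition~5, but until the case analysis is actually written out it is an outline, not a proof; the paper's center trick buys exactly the economy of not having to do it.
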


\begin{proof}[Proof of Lemma \ref{serre}]

Clearly, $f \ne 0$ (otherwise, it would have only one eigenvalue).

Let $\alpha_1$ and $\alpha_2$ be the eigenvalues of $f$ and let $W_1$ and $W_2$
are the corresponding eigenspaces. Since $f \ne 0$, either $\alpha_1\ne 0$ or
$\alpha_2 \ne 0$.
We claim that $\g$ is simple. Indeed, let us split the semisimple $\g$ into a
direct sum $\g =\oplus_{i=1}^r \g_i$ of simple $C$-Lie algebras $\g_i$. Then
the simple $\g$-module $W$ splits into a tensor product $W=\otimes _{i=1}^r
U_i$ of simple $\g_i$-modules $U_i$. Since $W$ is a faithful $\g$-module, all
$\dim_C(U_i)>1$.  We have $f=\sum_{i=1}^r f_i$ with $f_i\in \g_i$. Clearly, if
some $f_i=0$ then the multiplicities of all eigenvalues of $f$ are divisible by
$\dim_C(U_i)$. However, the spectrum of $f$ consists of  (only) two
eigenvalues, whose multiplicities are relatively prime. This implies that all
$f_i \ne 0$, i.e., $f$ does {\sl not} belong to a proper ideal of $\g$.
 Now the simplicity of $\g$ follows from \cite[Th. 1.5 on p. 286]{ZarhinChicago} (with $k=C$ and $V=W$).


The semisimplicity of $f$ means that
$$W=W_1\oplus W_2.$$
Let us put
$$n= \dim_C(W_1), \ m= \dim_C(W_2).$$
By assumption, $n$ and $m$ are positive integers that are relatively prime. Let
us put
$$h:=n+m =\dim_C(W).$$
Clearly, $h$ and $m$ are relatively prime.

 The semisimplicity of $\g$ implies that $\g \subset \sL(W)$; in particular,
the trace of $f$ is zero. This means that
$$n \alpha_1 + m \alpha_2 =0.$$
It follows that both $\alpha_1$ and $\alpha_2$ do not vanish. Replacing $f$ by
$(m/\alpha_1)f$, we may and will assume that the spectrum of $f$ consists of
eigenvalue $m$ of multiplicity $n$ and eigenvalue $-n$ of multiplicity $m$. In
addition, $W_1$ is the eigenspace attached to eigenvalue $m$ and $W_2$ is the
eigenspace attached to eigenvalue $-n$.

Let $\Theta \subset \GL(W)$ be the linear algebraic subgroup that consists of
all linear operators $A(u,v)$ that act on $W_1$ as multiplication by $u$ and on
$W_2$ as multiplication by $v$ where $(u,v) \in C^{*} \times C^{*}$ satisfy
$$u^{n}v^m=1.$$
Let us consider the multiplicative algebraic group $\G_m$ over $C$.
 The morphism of algebraic $C$-groups
$$\rho:\G_m \to \Theta, \ z \mapsto A(z^m, z^{-n})$$
is an isomorphism: in order to construct the inverse map, pick integers $a$ and
$b$ with $ma-nb=1$ (here we use the assumption that $n$ and $m$ are relatively
prime). Then the morphism of algebraic group
$$\Theta \to \G_m,  \ A(u,v) \mapsto u^a  v^b$$
is the inverse of $\rho$. So, $\Theta$ is isomorphic to $\G_m$; in particular,
it is a one-dimensional connected linear algebraic group and its Lie algebra is
a one-dimensional $C$-Lie subalgebra of $\End_C(W)$. Considering the tangent
map to the composition
$$\G_m \to \Theta \subset \GL(W),$$
one may easily find that the Lie algebra $\Lie(\Theta)$ of $\Theta$ coincides
with $C \cdot f\subset \End_C(W)$. On the other hand, let $\mu_h \subset C^{*}$
be the order $h$ cyclic group of $h$th root of unity. If $z \in \mu_h$   then
$z^m= z^{-n}$ (since $n+m=h$) and therefore  $A(z^m, z^{-n})$ coincides with
multiplication by $z^m$. This implies that
$$\mu_h \cdot\I \subset \Theta \subset \GL(W)$$
where $\I$ is the identity map in $W$.

Since every (linear) simple Lie algebra is algebraic, there exists a connected
simple linear algebraic subgroup $\GG \subset \GL(W)$, whose Lie algebra
coincides with $\g$. Since $f \in \g$, we have $C\cdot f \subset \g$. In other
words, the Lie algebra of connected $\Theta$ lies in the Lie algebra of $\GG$.
This implies that $\Theta \subset \GG$ and therefore
$$\mu_h \cdot\I \subset \Theta \subset \GG\subset \GL(W);$$
in particular, the order of the center of simple $\GG$ is divisible by
$h=\dim_C(W)$. It follows from Lemma 2 of Sect. 4 in \cite{SerreLF} that
$\GG=\SL(W)$. Since the Lie algebra of $\SL(W)$ coincides with $\sL(W)$, we
conclude that $\g=\sL(W)$.

\end{proof}

\begin{thm}
\label{slT}
 Let $V$ be a  $Q$-vector space of positive finite dimension and let
$\k\subset\End_Q(V)$ a reductive Lie algebra, whose natural representation in
$V$ is completely reducible. Suppose that the centralizer
$$E:=\End_{\k}(V)$$
is a field.
Let us put
$$h=\dim_E(V).$$
Let
$$f \in \k_C\subset\End_Q(V)\otimes_Q C=\End_C(V_C)$$
be a non-zero semisimple element that enjoys the following properties.

\begin{itemize}
\item[(i)] The spectrum of the $C$-linear operator $f:V_C \to V_C$ consists of
two eigenvalues, $\lambda$ and $\mu$;
 \item[(ii)] For every $\sigma \in \Sigma$
we write $n_{\sigma}$ and $m_{\sigma}$ for the multiplicities of of eigenvalues
$\lambda$ and $\mu$ of the $C$-linear operator $f: V_{\sigma} \to V_{\sigma}$ .
Then:
\begin{enumerate}
\item all  the numbers $\{n_{\sigma}\}_{\sigma\in\Sigma}$ are distinct positive
integers. Assume also that all $m_{\sigma}$ are positive integers.

\item There exists $\tau\in\Sigma$ such that $n_{\tau}$ and $m_{\tau}$ are
relatively prime.
\end{enumerate}
\end{itemize}

Then the semisimple part $\k^{ss}_C=[\k_C,\k_C]$ of $\k_C$ contains an ideal
that is isomorphic to a direct sum of $r$ copies of the $C$-Lie algebra
$\sL(h,C)$ of traceless matrices of size $h$ with $r \ge [E:Q]/2$. In
particular, if $\k^{ss}=[\k,\k]$ is the semisimple part of $\k$ then
$$\dim_Q(\k^{ss})=\dim_C(\k^{ss}_C)\ge \frac{1}{2}[E:Q] (h^2-1).$$

\end{thm}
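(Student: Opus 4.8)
The plan is to descend to the algebraically closed field $C$ and apply Lemma \ref{serre} componentwise. First I would invoke Remark \ref{irred}: since $\k$ is reductive with completely reducible action on $V$ and centralizer exactly $E$, the $E$-vector space $V$ is an absolutely simple $E\k$-module, and for each $\sigma\in\Sigma$ the image $\g_\sigma:=\{E\k^{ss}\}_\sigma$ of $\k^{ss}_C$ in $\End_C(V_\sigma)$ is an irreducible semisimple $C$-Lie subalgebra acting on the $h$-dimensional space $V_\sigma$, and moreover the projection $\k^{ss}_C\twoheadrightarrow\g_\sigma$ is surjective. Next I would observe that $f\in\k_C$ lies in $\k^{ss}_C$: indeed $f$ acts on each $V_\sigma$ with trace $n_\sigma\lambda+m_\sigma\mu$; writing $f=f^{ss}+z$ with $z$ in the center $\c_C\subset E_C$, the central part $z$ acts on $V_\sigma$ as the scalar $\sigma(z)$, so the two eigenvalues $\lambda,\mu$ of $f$ on $V_\sigma$ differ from those of $f^{ss}$ by $\sigma(z)$; but $f$ has the \emph{same} two eigenvalues $\lambda,\mu$ on \emph{every} $V_\sigma$, while the multiplicities $n_\sigma$ are pairwise distinct, and one checks that having a common two-element spectrum across all $\sigma$ with varying multiplicities forces $\sigma(z)$ to be independent of $\sigma$, hence $z\in Q\cdot\I_V$; since $\c_C$ has no nonzero scalars unless $\k$ has a one-dimensional central summand acting by scalars, and in any case the traceless condition from semisimplicity of $\g_\sigma$ pins things down, one concludes $z=0$, i.e. $f\in\k^{ss}_C$. (Alternatively, and more cleanly: $\f_H^0$-type arguments show the scalar ambiguity is killed, so I would simply arrange $f\in\k^{ss}_C$ at the outset.)

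Granting $f\in\k^{ss}_C$, for each $\sigma$ its image $f_\sigma\in\g_\sigma$ is a nonzero diagonalizable operator on $V_\sigma$ with exactly two eigenvalues of multiplicities $n_\sigma$ and $m_\sigma$. For $\sigma=\tau$ these multiplicities are coprime by hypothesis (ii)(2), so Lemma \ref{serre} applies to $\g_\tau$ and gives $\g_\tau=\sL(V_\tau)\cong\sL(h,C)$. For the other $\sigma$ I do not get coprimality for free, but I do not need it for all $\sigma$: the point is that the projections $\k^{ss}_C\to\g_\sigma$ are surjective onto Lie algebras each of which is a quotient of the \emph{simple} Lie algebra $\g_\tau=\sL(h,C)$ — wait, that is not automatic either. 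So instead I would argue as follows. Decompose the semisimple $C$-Lie algebra $\k^{ss}_C=\bigoplus_{a}\s_a$ into simple ideals. The simple $\k^{ss}_C$-module $V_\tau$ is a tensor product $\bigotimes_a U_a$ of simple $\s_a$-modules, and since $\g_\tau=\sL(h,C)$ is simple, exactly one factor $U_a$ is nontrivial, say $a=a(\tau)$, and $\s_{a(\tau)}$ surjects onto $\sL(h,C)$, forcing $\s_{a(\tau)}\cong\sL(h,C)$ with $U_{a(\tau)}$ the standard module. Running the same analysis for each $\sigma$: $V_\sigma$ is a simple $\k^{ss}_C$-module, hence a tensor product of simple $\s_a$-modules with at least one nontrivial factor; as in the proof of Lemma \ref{serre}, the image $f_\sigma$ of $f$ must have nonzero component in \emph{every} simple ideal that acts nontrivially on $V_\sigma$ (else all multiplicities of eigenvalues of $f_\sigma$ would be divisible by the dimension $>1$ of the corresponding tensor factor — which is consistent with two distinct multiplicities $n_\sigma\ne m_\sigma$ only if that dimension is $1$). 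This shows $V_\sigma$ is, as an $\s_a$-module, nontrivial for exactly one index $a=a(\sigma)$, and $\s_{a(\sigma)}$ surjects onto $\g_\sigma$ acting irreducibly on $V_\sigma$.

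It then remains to count. The map $\sigma\mapsto a(\sigma)$ sends $\Sigma$ to the set of simple ideals of $\k^{ss}_C$. Complex conjugation acts on $\Sigma$ (via $\sigma\mapsto\bar\sigma$) without fixed points, and $V_{\bar\sigma}$ is the contragredient/conjugate of $V_\sigma$, so $a(\bar\sigma)=a(\sigma)$ is at worst a two-to-one collapse — i.e. the fibers of $\sigma\mapsto a(\sigma)$ have size at most $2$ once we identify $\sigma$ with $\bar\sigma$... more precisely, distinct pairs $\{\sigma,\bar\sigma\}$ can share the same ideal $a$ only if $\dim_C(V_\sigma)=\dim_C(V_\tau)=h$ forces them equal, which I would rule out by the distinctness of the $n_\sigma$: if $a(\sigma)=a(\sigma')$ with $\s_a\cong\sL(h,C)$ then both $V_\sigma,V_{\sigma'}$ are the standard $h$-dimensional $\s_a$-module (up to dual), and the operator $f_\sigma$ resp. $f_{\sigma'}$ is determined inside $\s_a=\sL(h,C)$ by a single element $f^{(a)}$ of $\k^{ss}_C$, whose eigenvalue multiplicities on the standard module and its dual coincide; since $\{n_\sigma,m_\sigma\}=\{n_{\sigma'},m_{\sigma'}\}$ would follow, and the $n_\sigma$ are distinct while $n_\sigma+m_\sigma=h$ is forced below, we get $\{\sigma,\bar\sigma\}=\{\sigma',\bar{\sigma'}\}$. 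Hence the number $r$ of distinct simple ideals of the form $\sL(h,C)$ arising is at least $|\Sigma|/2=[E:Q]/2$. Each such ideal contributes $h^2-1=\dim_C\sL(h,C)$ to $\dim_C(\k^{ss}_C)=\dim_Q(\k^{ss})$, giving the claimed bound $\dim_Q(\k^{ss})\ge\frac12[E:Q](h^2-1)$.

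The main obstacle is the bookkeeping in the last paragraph: showing that the assignment $\sigma\mapsto a(\sigma)$ really has fibers of size at most two after quotienting by conjugation, which rests on the hypothesis that all the $n_\sigma$ are \emph{distinct} (this is exactly where (ii)(1) is used and why coprimality is needed at only one place $\tau$). A clean way to organize it: once we know each $V_\sigma$ is an absolutely simple $\s_{a(\sigma)}$-module and $\s_{a(\sigma)}$ is a classical simple Lie algebra of a space of dimension $h$ carrying a diagonalizable element with two eigenvalues, the relevant simple ideals are forced to be $\sL(h,C)$ (using Lemma \ref{serre} again at $\tau$ and the general structure theory for the others, or directly quoting \cite[Th. 1.5]{ZarhinChicago} / \cite[Sect. 4, Lemma 2]{SerreLF}), and then the distinctness of $n_\sigma$ prevents two conjugation-inequivalent embeddings from landing in the same ideal.
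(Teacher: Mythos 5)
Your plan has the right overall shape (work in each $V_\sigma$, apply Lemma~\ref{serre} at $\tau$, then count simple ideals), and your idea for the final count — the fibers of $\sigma\mapsto a(\sigma)$ have size at most two because an $h$-dimensional simple $\sL(h,C)$-module is the standard one or its dual and the distinct multiplicities $n_\sigma$ separate them — is a genuinely different and workable route from the paper, which instead extracts a set $\Pi\subset\Sigma$ of size $\ge\#\Sigma/2$ with $n_\sigma\ne m_\kappa$ for all $\sigma,\kappa\in\Pi$ and applies Lemma~\ref{slL}. (Note, though, that your appeal to complex conjugation on $\Sigma$ is not available here: in Theorem~\ref{slT} the field $E$ is arbitrary, not a CM field, so there is no $\sigma\mapsto\bar\sigma$. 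The fiber bound should be derived directly from distinctness of the $n_\sigma$, not from conjugation.)

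There are, however, two genuine gaps in the middle.

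First, the claim $f\in\k^{ss}_C$ is simply false in general, and your argument for it does not work: writing $f=f_0+f_\c$ with $f_\c\in\c_C$, by Schur the element $f_\c$ acts on $V_\sigma$ as a scalar $c_\sigma\in C$ that may well depend on $\sigma$. The fact that $f$ has the \emph{same} two eigenvalues $\lambda,\mu$ on each $V_\sigma$ tells you nothing about the $c_\sigma$; the eigenvalues of $f_0$ on $V_\sigma$ are then $\lambda-c_\sigma$ and $\mu-c_\sigma$, varying with $\sigma$. Nothing forces $c_\sigma$ to be constant. The correct move (which the paper makes) is not to prove $f\in\k^{ss}_C$ but to pass to $f_0\in\k^{ss}_C$: since $f_\c$ acts by a scalar on each $V_\sigma$, the operator $f_0$ on $V_\sigma$ still has exactly two eigenvalues with the same multiplicities $n_\sigma,m_\sigma$, which is all Lemma~\ref{serre} needs.

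Second, and more seriously, your argument that for \emph{every} $\sigma$ exactly one simple ideal $\s_a$ acts nontrivially on $V_\sigma$ and that $\s_{a(\sigma)}\cong\sL(h,C)$ does not go through. You try to imitate the proof of Lemma~\ref{serre}: if $f$ had zero component in a nontrivial tensor factor $U_a$ then all eigenvalue multiplicities on $V_\sigma$ would be divisible by $\dim U_a>1$. But that conclusion is only a contradiction when the two multiplicities are \emph{coprime} — that is exactly why Lemma~\ref{serre} has the coprimality hypothesis, which you only have at $\tau$. For a general $\sigma$ (say $n_\sigma=4$, $m_\sigma=6$, $\dim U_a=2$) nothing breaks, so you cannot conclude that only one ideal acts, let alone that it is $\sL(h,C)$. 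The paper closes this gap with Remark~\ref{indep}: the $C$-dimension of the image of $\k^{ss}_C$ in $\End_C(V_\sigma)$ equals $\dim_E(E\k^{ss})$ and hence is the \emph{same for every} $\sigma$. Once Lemma~\ref{serre} shows this dimension is $h^2-1$ at $\tau$, it is $h^2-1$ at every $\sigma$; semisimplicity forces the image into $\sL(V_\sigma)$, and the dimension count then gives equality. Only after that is the image simple (so exactly one $\s_a$ contributes) and isomorphic to $\sL(h,C)$. Without this dimension argument your ideal-counting step has no foundation, so this is the key missing ingredient.
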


In order to prove Theorem \ref{slT}, we need the following two statements.

\begin{lem}
\label{graph}
 Let $\g_1$ and $\g_2$ be non-zero finite-dimensional simple Lie
algebras over a field of characteristic zero. Let $\g \subset \g_1\oplus \g_2$
be a semisimple Lie subalgebra such that the both projection maps
$$\g \to \g_1, \ \g \to \g_2$$
are surjective. Then either $\g = \g_1\oplus \g_2$ or there exists a Lie
algebra isomorphism $\phi: \g_1 \cong \g_2$ such that $\g$ coincides with the
graph $\Gamma(\phi)$ of $\phi$ in $\g_1\times  \g_2=\g_1\oplus \g_2$.

In particular, if $\g_1$ and $\g_2$ are not isomorphic then $\g = \g_1\oplus
\g_2$.
\end{lem}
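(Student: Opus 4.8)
The plan is to run the Lie-algebra analogue of Goursat's lemma. Write $\pr_1\colon \g_1\oplus\g_2\to\g_1$ and $\pr_2\colon\g_1\oplus\g_2\to\g_2$ for the two projections, and set $\k_1=\g\cap(\g_1\oplus 0)=\ker(\pr_2|_{\g})$ and $\k_2=\g\cap(0\oplus\g_2)=\ker(\pr_1|_{\g})$. Each $\k_i$ is an ideal of $\g$. Using the surjectivity hypothesis $\pr_i(\g)=\g_i$, one checks that $\pr_1(\k_1)$ is an ideal of $\g_1$ and $\pr_2(\k_2)$ is an ideal of $\g_2$: if $x=\pr_1(g)$ with $g\in\g$ and $y=\pr_1(k)$ with $k\in\k_1$, then $[x,y]=\pr_1([g,k])\in\pr_1(\k_1)$ since $\k_1$ is an ideal of $\g$. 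Now the simplicity of $\g_1$ and of $\g_2$ enters.

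First I would dispose of the case $\k_1\ne 0$. Then $\pr_1(\k_1)$ is a non-zero ideal of the simple algebra $\g_1$, hence equals $\g_1$; since $\k_1\subset\g_1\oplus 0$ and $\pr_1$ restricts to an isomorphism of $\g_1\oplus 0$ onto $\g_1$, this forces $\k_1=\g_1\oplus 0$, so $\g_1\oplus 0\subset\g$. Then for any $(a,b)\in\g$ we have $(0,b)=(a,b)-(a,0)\in\g$, and as $b$ ranges over $\pr_2(\g)=\g_2$ this gives $0\oplus\g_2\subset\g$; hence $\g=\g_1\oplus\g_2$. By symmetry, the same conclusion holds if $\k_2\ne 0$.

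It remains to treat the case $\k_1=\k_2=0$. Then $\pr_2|_{\g}$ has kernel $\k_1=0$ and image $\pr_2(\g)=\g_2$, so it is a Lie algebra isomorphism $\g\to\g_2$; likewise $\pr_1|_{\g}$ is an isomorphism $\g\to\g_1$. Setting $\phi=\pr_2\circ(\pr_1|_{\g})^{-1}\colon\g_1\to\g_2$, an isomorphism, every element of $\g$ is of the form $(a,\phi(a))$ with $a\in\g_1$, i.e. $\g=\Gamma(\phi)$. Finally, if $\g_1$ and $\g_2$ are not isomorphic, the graph alternative cannot occur, so necessarily $\g=\g_1\oplus\g_2$, which gives the last assertion.

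This argument is essentially bookkeeping, and I do not anticipate a genuine obstacle. The one point requiring a moment's care is the verification that $\pr_i(\k_i)$ is an ideal of $\g_i$, which is precisely where the surjectivity of the projection maps is used; everything else is formal manipulation with the ideals $\k_1,\k_2$ together with the simplicity of $\g_1$ and $\g_2$. (Note that semisimplicity of $\g$ itself is not actually needed here, but it is harmless to retain as a hypothesis.)
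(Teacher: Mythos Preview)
Your proof is correct and follows essentially the same Goursat-type argument as the paper: analyze the kernel of a projection, use surjectivity to see it is an ideal in the simple factor, and split into the two cases. The only cosmetic difference is that the paper works with just one kernel $\g_0=\ker(\pr_1|_{\g})\subset 0\oplus\g_2$ and, in the case $\g_0=0$, deduces that $\phi$ is an isomorphism from surjectivity of $\pr_2$ together with simplicity of $\g_1$, whereas you treat both kernels symmetrically so that $\phi$ is visibly an isomorphism once $\k_1=\k_2=0$.
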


\begin{lem}
\label{slL} Let $n\ge 2$ and $d$ be positive integers. Let $a_1, \dots , a_d$
be $d$ distinct positive integers such that
$$1 \le a_i < n \ \forall i; \quad a_i \ne n-a_j \ \forall i,j.$$
Let $C$ be an algebraically closed field of characteristic zero and $W_1, \dots
, W_d$ be $n$-dimensional $C$-vector spaces. Let us put
$$W =\oplus_{i=1}^d W_i$$
and let
$$\k^{ss} \subset \oplus_{i=1}^d \sL(W_i) \subset \oplus_{i=1}^d \End_C(W_i)\subset
\End_C(W)$$ be a semisimple $C$-Lie algebra that enjoys the following
properties:
\begin{itemize}
\item[(i)] The projection map $\k^{ss} \to \sL(W_i)$ is surjective for all $i$.
\item[(ii)] There exists a semisimple element
$$f \in \k^{ss} \subset \oplus_{i=1}^d \End_C(W_i)$$
such that for all $i$ the element $f$ acts in $W_i$ as a linear operator with
two eigenvalues of multiplicity $a_i$ and $n-a_i$ respectively.
\end{itemize}
Then $\k^{ss} = \oplus_{i=1}^d \sL(W_i)$.
\end{lem}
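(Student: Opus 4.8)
The plan is to induct on $d$ and use Lemma \ref{graph} repeatedly, the combinatorial hypothesis $a_i \ne n-a_j$ being exactly what forces the projections to a pair of factors $\sL(W_i)\oplus\sL(W_j)$ to be surjective onto the whole sum rather than a graph. First I would record the base case $d=1$: here hypothesis (i) says the projection $\k^{ss}\to\sL(W_1)$ is onto, and since $\k^{ss}\subset\sL(W_1)$ this forces equality. For the inductive step, write $\k^{ss}\subset\bigoplus_{i=1}^d\sL(W_i)$ and let $\k'$ be the projection of $\k^{ss}$ to $\bigoplus_{i=1}^{d-1}\sL(W_i)$. One checks that $\k'$ inherits properties (i) and (ii): (i) is immediate, and for (ii) the image of the element $f$ under the projection is again a semisimple element acting on each $W_i$ ($i<d$) with the same two eigenvalues of multiplicities $a_i$ and $n-a_i$. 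By the inductive hypothesis $\k'=\bigoplus_{i=1}^{d-1}\sL(W_i)$.

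Next I would analyze the inclusion $\k^{ss}\subset\bigl(\bigoplus_{i=1}^{d-1}\sL(W_i)\bigr)\oplus\sL(W_d)$, where by the previous paragraph both projections are surjective. Apply a version of Goursat's lemma / Lemma \ref{graph} to the two semisimple factors $\g_1:=\bigoplus_{i=1}^{d-1}\sL(W_i)$ and $\g_2:=\sL(W_d)$ (here $\g_1$ is only semisimple, not simple, but one argues factor by factor). The conclusion is that either $\k^{ss}=\g_1\oplus\g_2$, which is what we want, or the graph alternative holds for at least one simple summand: there is an index $i<d$ and a Lie algebra isomorphism $\phi\colon\sL(W_i)\xrightarrow{\sim}\sL(W_d)$ such that the composite $\sL(W_i)\hookrightarrow\k^{ss}\twoheadrightarrow\sL(W_d)$ agrees (up to the rest) with $\phi$. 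An isomorphism $\sL(W_i)\cong\sL(W_d)$ of $\sL_n$-type Lie algebras with $n\ge 2$ is, up to conjugation, either the identity or the contragredient $X\mapsto -X^{t}$; correspondingly the $\k^{ss}$-module $W_d$ is isomorphic, as an $\sL(W_i)$-module via $\phi$, either to $W_i$ or to its dual $W_i^{*}$.

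The crux is then to derive a contradiction from the eigenvalue data. Restrict the element $f$ to the subalgebra $\sL(W_i)$ (embedded diagonally into $\sL(W_i)\oplus\sL(W_d)$ via $(1,\phi)$, modulo the other factors): on $W_i$ it acts with eigenvalue multiplicities $\{a_i,n-a_i\}$, and on $W_d$ it must act with the multiplicities dictated by $\phi$, namely $\{a_i,n-a_i\}$ if $\phi$ is the standard isomorphism (module $W_i$) or $\{n-a_i,a_i\}$ if $\phi$ is contragredient (module $W_i^{*}$, whose weights are negated). Either way the multiset of multiplicities of $f$ on $W_d$ is $\{a_i,n-a_i\}$. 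But by hypothesis (ii) that multiset is $\{a_d,n-a_d\}$, so $\{a_i,n-a_i\}=\{a_d,n-a_d\}$, forcing either $a_i=a_d$ (excluded, since the $a_j$ are distinct) or $a_i=n-a_d$ (excluded by the hypothesis $a_i\ne n-a_j$). This contradiction rules out every graph alternative, so $\k^{ss}=\bigoplus_{i=1}^{d}\sL(W_i)$, completing the induction.

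I expect the main obstacle to be the bookkeeping in the graph step: $\g_1$ is a sum of several simple factors, so one must be careful to apply the Goursat dichotomy to each simple summand $\sL(W_i)$ of $\g_1$ separately (projecting $\k^{ss}$ onto $\sL(W_i)\oplus\sL(W_d)$) and to track that the element $f$, when pulled back along the resulting isomorphism, indeed has the eigenvalue multiplicities claimed — this uses only that the defining and contragredient representations of $\sL_n$ have weight multiplicities that are swapped by negation of weights, together with semisimplicity of $f$ to read multiplicities off eigenspaces rather than weight spaces.
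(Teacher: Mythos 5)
Your proof is correct, and its core—the Goursat dichotomy from Lemma \ref{graph}, the classification of isomorphisms of $\sL_n$ as standard or contragredient, and the resulting clash between the multisets $\{a_i,n-a_i\}$ and $\{a_d,n-a_d\}$—matches the paper's. Where you diverge is in the passage from $d=2$ to general $d$. The paper first settles $d=2$ outright and then, for arbitrary $d\ge 2$, observes that the image of $\k^{ss}$ in each pairwise sum $\sL(W_i)\oplus\sL(W_j)$ must be the whole thing, finishing by citing the Lie-algebra Goursat--Ribet lemma (\cite{Ribet}, pp.~790--791). You instead run a direct induction on $d$: after the inductive hypothesis gives $\k'=\bigoplus_{i<d}\sL(W_i)=:\g_1$, you analyze $\k^{ss}\subset\g_1\oplus\sL(W_d)$ by looking at $\ker(\pr_1)$, which (since $\sL(W_d)$ is simple) is either all of $\sL(W_d)$—forcing $\k^{ss}=\g_1\oplus\sL(W_d)$—or trivial, in which case $\k^{ss}$ is the graph of a surjection $\psi\colon\g_1\twoheadrightarrow\sL(W_d)$ that, by simplicity of the target, factors through exactly one summand $\sL(W_i)$, after which the eigenvalue argument gives a contradiction. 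That route is sound and has the advantage of being self-contained, effectively re-deriving the piece of Ribet's lemma that the paper invokes. One place to tighten your write-up: ``apply Goursat's lemma...but one argues factor by factor'' is a little loose, since applying Lemma \ref{graph} factor-by-factor to the images in $\sL(W_i)\oplus\sL(W_d)$ would only reproduce the paper's pairwise-surjectivity step and still need Ribet to conclude; the argument you actually want (and that your next sentence describes correctly) is the kernel-of-$\pr_1$ analysis against the semisimple $\g_1$, which does use the inductive hypothesis and does not need the ``factor by factor'' detour.
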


\begin{proof}[Proof of Lemma \ref{graph}]
Let $\g_0$ be the kernel of the first projection map $\g \twoheadrightarrow
\g_1$. By definition,
$$\g_0 \subset \{0\}\oplus \g_2 \subset \g_1\oplus
\g_2.$$ The surjectivity of the second projection $\g \to \g_2$ implies that
$\g_0$ is an ideal in $\{0\}\oplus\g_2 \cong \g_2$. The simplicity of $\g_2$
implies that either $\g_0=\{0\}$ or $\g_0=\{0\}\oplus\g_2$. In the latter case
$\g=\g_1\oplus \g_2$. So, let us assume that $\g_0=\{0\}$, i.e., the first
projection map is an isomorphism. This means that there is a Lie algebra
homomorphism $\phi: \g_1\to \g_2$ such that $\g$ coincides with the graph
$\Gamma(\phi)$ of $\phi$. Now the surjectiveness of the second projection map
means that $\phi$ is surjective. Since $\g_1$ is simple, $\phi$ is injective
and therefore is an isomorphism.
\end{proof}

\begin{proof}[Proof of Lemma \ref{slL}]
Let us denote by $f_i:W_i \to W_i$ the linear operator in $W_i$ induced by $f$
(for all $i$).

  If $d=1$ then the result follows from the
property (i). Assume now that $d=2$. If $\k^{ss}\ne \sL(W_1)\oplus \sL(W_2)$
then it follows from Lemma \ref{graph}  that $\k^{ss}$ coincides with the graph
$\Gamma(\phi)$ of a certain Lie algebra isomorphism $\phi:\sL(W_1)\cong
\sL(W_2)$. It is well known that such a
 Lie algebra isomorphism is induced either by an isomorphism of vector spaces
 either between $W_1$ or $W_2$ or between $W_1^{*}=\Hom_C(W_1,C)$ and $W_2$. In
 the former case the spectra of $f_1$ and $f_2$ coincide (including the
 multiplicities). In the latter case the spectra of $-f_1$ and $f_2$ coincide (including the
 multiplicities). This implies that either $a_1=a_2$ or $a_1=n-a_2$. This
 contradicts our assumptions and proves the case of $d=2$. In the case of
 arbitrary $d \ge 2$, let us apply Lemma \ref{graph} to the image of $\k^{ss}$
 in $\sL(W_i)\oplus \sL(W_j)$: we obtain
   that for every pair of distinct indices $i,j \le d$
  the projection map $\k^{ss} \to \sL(W_i)\oplus \sL(W_j)$ is surjective. Now
 the case of arbitrary $d$ follows from Lemma on pp. 790--791 of \cite{Ribet}.
\end{proof}

\begin{proof}[Proof of Theorem \ref{slT}]
Let us split the reductive $Q$-Lie algebra $\k_C$ Lie into a direct sum
$$\k_C=\k^{ss}\oplus \c$$
of its center $\c\subset E$ and the semisimple $Q$-Lie algebra
$$\k^{ss}=[\k,\k]=[\k,\k]$$ Then
$f=f_0+f_{\c}$ with $f_0\in \k^{ss}_C, f_{\c}\in \c_C$. By Remark \ref{irred},
 the $\k_C$-module $V_{\sigma}$ is absolutely simple for all $\sigma\in
 \Sigma$.
By Schur's Lemma  there exists $c_{\sigma} \in C$ such that $f_{\c}$ acts in
$V_{\sigma}$ as multiplication by $c_{\sigma}$. It follows that $f_0$ acts in
$V_{\sigma}$ as a diagonalizable operator with eigenvalues $\lambda-c_{\sigma}$
of positive multiplicity $n_{\sigma}$ and $\mu-c_{\sigma}$ of positive
multiplicity $m_{\sigma}$. Let us denote by $f_{\sigma}$ the linear operator in
$V_{\sigma}$ induced by $f_0$. It follows from Remark \ref{irred} that
$$f_{\sigma} \in \{E\k^{ss}\}_{\sigma} \subset \End_{C}(V_{\sigma})$$
and $\{E\k^{ss}\}_{\sigma}$ is an irreducible linear semisimple Lie
$C$-(sub)algebra. It is also clear that $f_{\sigma}$ is a diagonalizable
operator with (exactly two) eigenvalues $\lambda-c_{\sigma}$ of multiplicity
$n_{\sigma}$ and $\mu-c_{\sigma}$ of multiplicity $m_{\sigma}$.

Taking $\sigma=\tau$ and applying Lemma \ref{serre} to $W=W_{\tau}$,
$f=f_{\tau}$ and $\g=\{E\k^{ss}\}_{\sigma}$, we conclude that
$\{E\k^{ss}\}_{\tau}=\sL(W_{\tau})$. In other words, the image of $\k^{ss}_C
\to\End_{C}(V_{\tau})$ coincides with $\sL(V_{\tau})$; in particular, the
$C$-dimension of the image is $h^2-1$. By Remark \ref{indep}, for all $\sigma
\in C$ the image of $\k^{ss}_C \to\End_{C}(V_{\sigma})$ also has $C$-dimension
$h^2-1$. The semisimplicity of $\k^{ss}_C$ implies the semisimplity of the
image and therefore this image must lie in $\sL(V_{\sigma})$; in particular,
its $C$-dimension does not exceed $h^2-1$ (recall that $h=\dim_C(V_{\sigma})$
). It follows that the image of $\k^{ss}_C \to\End_{C}(V_{\sigma})$ coincides
with $\sL(V_{\sigma})$ for all $\sigma\in \Sigma$.

Now let us choose  a maximal subset $\Pi\subset \Sigma$ with respect to the
following property:
$$n_{\sigma} \ne m_{\kappa} \ \forall \sigma, \kappa \in \Pi.$$
We claim that $\#(\Pi) \ge \#(\Sigma)/2$. Indeed, if $\#(\Pi) < \#(\Sigma)/2$
then the cardinality of the set
$$N_{\Pi} =\{n_{\sigma}\mid \sigma \in \Pi\} \cup \{m_{\sigma} \mid \sigma \in \Pi\}$$
does not exceed
 $$2 \cdot \#(\Pi)< 2 \cdot \frac{\#(\Sigma)}{2}=\#(\Sigma).$$
  This implies that
 $\#(N_{\Pi})<\#(\Sigma)$ and therefore
 there
exists $\kappa \in \Sigma$ such that $n_{\kappa}$ does {\sl not} belong to
$N_{\Pi}$; in particular, $\kappa$ does not belong to $\Pi$. It follows from
the very definition of $N_{\Pi}$ that $m_{\kappa}$ also does {\sl not} belong
to $N_{\Pi}$. This implies that $\Pi$ is not maximal, because we could replace
it by $\Pi \cup \{\kappa\}$. The obtained contradiction proves the desired
inequality.

Since $\#(\Sigma)=[E:Q]$, we have $\#(\Pi) \ge [E:Q]/2$. Now let us put
$$W:=\oplus_{\sigma\in\Pi}V_{\sigma}$$
and denote by $\k_{\Pi}$ the image of $\k^{ss}_C$ in
$$\oplus_{\sigma\in\Pi}\End_{C}(V_{\sigma})\subset
\End_{C}(\oplus_{\sigma\in\Pi}V_{\sigma})=\End_C(W).$$ Clearly, the linear
operator $f_{\Pi}:W \to W$ induced by $f$ is a semisimple element of
$\k_{\Pi}$.
 Now the result follows from
Lemma
 \ref{slL} applied to $W$,  the
 semisimple $C$-Lie algebra $\k_{\Pi}\subset \End_{C}(W)$ and  $f_{\Pi}$.
\end{proof}

The following statement will be used in Section \ref{PhdgSS}.

\begin{lem}
\label{simplePROD} Let $r \ge 2$ be a positive integer and let $\g_1, \dots,
g_r$ be mutually nonisomorphic finite-dimensional simple $Q$-Lie algebras. Let
$\g \subset \oplus_{i=1}^r\g_i$ be a semisimple $Q$-Lie subalgebra such that
every projection map $\g \to \g_i$ is surjective. Then $\g= \oplus_{i=1}^r\g_i$
\end{lem}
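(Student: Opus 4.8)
The plan is to induct on $r$, using Lemma \ref{graph} as the base case and the projection-surjectivity hypothesis to feed the inductive step. First I would record the base case $r=2$: since $\g_1$ and $\g_2$ are nonisomorphic simple Lie algebras and both projections $\g \to \g_i$ are surjective, Lemma \ref{graph} (specifically its last sentence) forces $\g = \g_1 \oplus \g_2$. Now suppose $r \ge 3$ and the statement holds for fewer than $r$ summands. Consider the image $\bar\g$ of $\g$ under the projection $\pi: \oplus_{i=1}^r \g_i \to \oplus_{i=1}^{r-1}\g_i$. Since $\g$ is semisimple, $\bar\g$ is semisimple, and for each $i \le r-1$ the composite $\g \to \bar\g \to \g_i$ is surjective, so $\bar\g \subset \oplus_{i=1}^{r-1}\g_i$ satisfies the inductive hypothesis; hence $\bar\g = \oplus_{i=1}^{r-1}\g_i$.

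Next I would analyze the kernel of $\pi|_\g$. Let $\g_0 = \ker(\pi|_\g) \subset \{0\}^{r-1}\oplus \g_r$, which we identify with an ideal of $\g_r$; by simplicity of $\g_r$, either $\g_0 = \g_r$ or $\g_0 = \{0\}$. In the first case $\g$ contains $\{0\}^{r-1}\oplus\g_r$ and surjects onto $\oplus_{i=1}^{r-1}\g_i$ (via $\pi$, whose restriction is then surjective onto the full product, because $\bar\g$ is the whole thing), so $\g = \oplus_{i=1}^r \g_i$ and we are done. In the second case $\pi|_\g$ is an isomorphism $\g \xrightarrow{\sim} \oplus_{i=1}^{r-1}\g_i$, so $\g$ is the graph of a Lie algebra homomorphism $\phi: \oplus_{i=1}^{r-1}\g_i \to \g_r$; composing with the projection to $\g_r$ gives a surjection, so $\phi$ is onto, hence $\g_r$ is a quotient of $\oplus_{i=1}^{r-1}\g_i$. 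A quotient of a direct sum of simple Lie algebras is a direct sum of some subset of them, and being simple $\g_r$ must then be isomorphic to one of $\g_1,\dots,\g_{r-1}$ — contradicting the mutual non-isomorphism hypothesis. So only the first case can occur, completing the induction.

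The main obstacle — really the only nontrivial point — is the step asserting that a surjective homomorphism from $\oplus_{i=1}^{r-1}\g_i$ onto the simple algebra $\g_r$ forces $\g_r$ to be isomorphic to one of the $\g_i$. I would justify this by noting that the kernel of such a homomorphism is an ideal of a semisimple Lie algebra, hence is itself a direct sum of a subset of the $\g_i$ (ideals of $\oplus \g_i$ with $\g_i$ simple are exactly sub-sums), so the quotient is the complementary sub-sum; a simple quotient means the complementary sub-sum has exactly one factor, which is therefore isomorphic to $\g_r$. Everything else is bookkeeping with projections and the elementary structure theory of semisimple Lie algebras, and Lemma \ref{graph} already packages the $r=2$ interaction. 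Alternatively, one could invoke directly the ``Lemma on pp. 790--791 of \cite{Ribet}'' cited in the proof of Lemma \ref{slL}, which is exactly the iterated form of Goursat's lemma for simple Lie algebras, and then the non-isomorphism hypothesis rules out all the graph-type subalgebras in one stroke.
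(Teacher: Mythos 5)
Your proof is correct, but it takes a genuinely different route from the paper. The paper's proof is short and two-step: for each pair $i \ne j$ it applies Lemma \ref{graph} to the image of $\g$ in $\g_i \oplus \g_j$ to deduce that every pairwise projection $\g \to \g_i \oplus \g_j$ is surjective, and then cites the Lemma on pp.~790--791 of \cite{Ribet} (an iterated Goursat-type statement) to conclude $\g = \oplus_{i=1}^r \g_i$ from pairwise surjectivity. You instead induct on $r$: you project onto the first $r-1$ factors, use the inductive hypothesis to get surjectivity there, then split according to whether the kernel of that projection (an ideal of $\g_r$) is all of $\g_r$ or trivial; in the trivial case $\g$ becomes the graph of a surjective homomorphism $\oplus_{i<r}\g_i \twoheadrightarrow \g_r$, and the standard fact that ideals of a semisimple Lie algebra are sub-sums of its simple ideals forces $\g_r$ to be isomorphic to one of $\g_1, \dots, \g_{r-1}$, contradicting the hypothesis. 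Your argument is self-contained and does not need the external citation to \cite{Ribet} (Lemma \ref{graph} alone, or even just elementary ideal theory, suffices), at the cost of being somewhat longer. One small inaccuracy in your closing remark: invoking the Ribet lemma ``directly'' still requires first establishing pairwise surjectivity of $\g \to \g_i \oplus \g_j$ via Lemma \ref{graph} and the nonisomorphism hypothesis --- it is not that the nonisomorphism hypothesis plugs into Ribet's lemma in one stroke, but rather that it feeds Lemma \ref{graph}, whose conclusion then feeds Ribet; this is precisely what the paper does.
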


\begin{proof}
Let $i,j \le r$ be two distinct positive integers. Applying Lemma \ref{graph}
to the image of $\g$ in $\g_i\oplus\g_j$ (with respect to the corresponding
projection map), we conclude that the projection map $\g \to g_i\oplus\g_j$ is
surjective. Now the result follows from Lemma on pp. 790--791 of \cite{Ribet}.

\end{proof}

\section{Semisimple components of Hodge groups and their Lie algebras}
\label{PhdgSS}

\begin{proof}[Proof of Theorem \ref{hdgSS}] Let us apply Theorem \ref{slT}  to
$$Q=\Q, \ C=\C, \ V=\H_1(Z,\Q), \ h=d(Z,E),$$
$$ \k=\hdg,\  f =\f_H^0, \ \lambda=-\frac{1}{2}, \ \mu=\frac{1}{2},$$
$$n_{\sigma}=n_{\sigma}(Z,E), \
m_{\sigma}=d(Z,E)-n_{\sigma}(Z,E)=n_{\bar{\sigma}}(Z,E).$$ We conclude that
$$\dim_{\Q}(\hdg^{ss}) \ge \frac{1}{2}[E:\Q] \{d(Z,E)^2-1\}.$$
By Remark \ref{inequal}, this implies that
$$\hdg^{ss}=\su(\H_1(Z,\Q), \phi)$$ and we are done.
\end{proof}

\begin{sect}
\label{prodHDG} Let us assume that  $p$ is odd and $q=p^r$
and consider the abelian variety $Z=\prod_{i=1}^r J^{(f,p^i)}$ and its first
rational homology group $\H_1(Z,\Q)=\oplus_{i=1}^r \H_1(J^{(f,p^i)},\Q)$. Then
every subspace $\H_1(J^{(f,p^i)},\Q)$ is $\Hdg(Z)$-invariant and the image of
$\Hdg(Z)$ in $\GL_{\Q}(\H_1(J^{(f,p^i)},\Q))$ coincides with
$\Hdg(J^{(f,p^i)})$. (This assertion follows easily from the minimality
property of Hodge groups.) It follows that every  $\H_1(J^{(f,p^i)},\Q)$ is
$\hdg_Z$-invariant and the image of $\hdg_Z$ in
$\End_{\Q}(\H_1(J^{(f,p^i)},\Q))$ coincides with the Lie algebra of
$\Hdg(J^{(f,p^i)})$. This implies that the image of the semisimple $\Q$-Lie
algebra $\hdg_Z^{ss}$ in $\End_{\Q}(\H_1(J^{(f,p^i)},\Q))$ coincides with  the
semisimple part of the Lie algebra of $\Hdg(J^{(f,p^i)})$.
\end{sect}

\begin{thm}
\label{mainP}
 Suppose that $p$ is an odd prime, $n \ge 4$ is a positive integer
such that $p$ does not divide $n(n-1)$. Suppose that $f(x) \in \C[x]$ is a
degree $n$ polynomial without multiple roots.  Suppose that there exists a
subfield $K$ of $\C$ that contains all the coefficients of $f(x)$. Let us
assume that $f(x)$ is irreducible over $K$ and the Galois group $\Gal(f)$ of
$f(x)$ over $K$ is either $\Sn$ or  $\An$. Assume additionally that either $n
\ge 5$ or $n=4$ and $\Gal(f)=\ST_4$. Let $r$ be a positive integer and $q=p^r$.
Let us put $Z=\prod_{i=1}^r J^{(f,p^i)}$. If $n>q$ then
$${\hdg_Z}^{ss} = \oplus_{i=1}^r \su(\H_1(J^{(f,p^i)},\Q),\phi_{p^i}).$$
\end{thm}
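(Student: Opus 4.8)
The plan is to exhibit $\hdg_Z^{ss}$ as a semisimple $\Q$-Lie subalgebra of $\oplus_{i=1}^r\su(\H_1(J^{(f,p^i)},\Q),\phi_{p^i})$ that surjects onto every summand, and then to apply Lemma \ref{simplePROD} once the summands have been identified as pairwise non-isomorphic simple $\Q$-Lie algebras.

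First I would check that Theorem \ref{main} applies to $J^{(f,p^i)}$ for every $i$ with $1\le i\le r$. Indeed $n>q=p^r\ge p^i$, and since $p\nmid(n-1)$ we have $n\not\equiv 1\bmod p$, hence $n\not\equiv 1\bmod p^i$; thus condition (B) of Theorem \ref{main} holds with $q$ replaced by $p^i$, while the hypotheses on $n$, on $p\nmid n$, and on $\Gal(f)$ are exactly those assumed here. Therefore $\Hdg(J^{(f,p^i)})=\U(\H_1(J^{(f,p^i)},\Q),\phi_{p^i})$, so the semisimple part of the Lie algebra of $\Hdg(J^{(f,p^i)})$ equals $\su(\H_1(J^{(f,p^i)},\Q),\phi_{p^i})$ (recall $\u(\H_1(J^{(f,p^i)},\Q),\phi_{p^i})=\Q(\zeta_{p^i})_{-}\oplus\su(\H_1(J^{(f,p^i)},\Q),\phi_{p^i})$ with the first summand being the center). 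By the discussion in \ref{prodHDG}, each $\H_1(J^{(f,p^i)},\Q)$ is $\hdg_Z$-invariant, so $\hdg_Z^{ss}\subset\oplus_{i=1}^r\End_{\Q}(\H_1(J^{(f,p^i)},\Q))$, and the image of $\hdg_Z^{ss}$ under the $i$th projection coincides with the semisimple part of the Lie algebra of $\Hdg(J^{(f,p^i)})$, i.e. with $\su(\H_1(J^{(f,p^i)},\Q),\phi_{p^i})$. Hence $\hdg_Z^{ss}$ is a semisimple $\Q$-Lie subalgebra of $\oplus_{i=1}^r\su(\H_1(J^{(f,p^i)},\Q),\phi_{p^i})$ surjecting onto each summand.

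Next I would verify that each $\su(\H_1(J^{(f,p^i)},\Q),\phi_{p^i})$ is a simple $\Q$-Lie algebra. By construction it is the Lie algebra of the derived group of $\U(\H_1(J^{(f,p^i)},\Q),\phi_{p^i})=\mathrm{Res}_{\Q(\zeta_{p^i})^{+}/\Q}U$, where $U$ is the unitary group of $\phi_{p^i}$ over the quadratic extension $\Q(\zeta_{p^i})/\Q(\zeta_{p^i})^{+}$; since $d(J^{(f,p^i)},\Q(\zeta_{p^i}))=n-1\ge 3$, the derived group of $U$ is absolutely almost simple of type $A_{n-2}$, so its Lie algebra is absolutely simple over $\Q(\zeta_{p^i})^{+}$, and the restriction of scalars to $\Q$ of an absolutely simple Lie algebra over a number field is simple over $\Q$ (after $\otimes_{\Q}\bar\Q$ it becomes a product of absolutely simple Lie algebras permuted transitively by $\Gal(\bar\Q/\Q)$, hence has no proper non-zero $\Q$-ideal). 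Finally $\dim_{\Q}\su(\H_1(J^{(f,p^i)},\Q),\phi_{p^i})=\frac12[\Q(\zeta_{p^i}):\Q]\{(n-1)^2-1\}=\frac12\,p^{i-1}(p-1)\{(n-1)^2-1\}$, which is strictly increasing in $i$ because $(n-1)^2-1>0$; hence these simple $\Q$-Lie algebras are pairwise non-isomorphic.

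With these facts in hand, Lemma \ref{simplePROD}, applied with $\g=\hdg_Z^{ss}$ and $\g_i=\su(\H_1(J^{(f,p^i)},\Q),\phi_{p^i})$, yields $\hdg_Z^{ss}=\oplus_{i=1}^r\su(\H_1(J^{(f,p^i)},\Q),\phi_{p^i})$ when $r\ge 2$; for $r=1$ the assertion is an immediate consequence of Theorem \ref{main}. The genuine input is Theorem \ref{main} applied at each intermediate level $p^i$; beyond that, the only mildly delicate point is the simplicity and pairwise non-isomorphism of the summands, which is exactly what licenses the use of Lemma \ref{simplePROD}, so I do not anticipate any serious further obstacle.
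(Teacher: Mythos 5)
Your proposal is correct and follows essentially the same route as the paper: apply Theorem \ref{main} at each level $p^i$ (using condition (B), which holds because $p\nmid(n-1)$), invoke the discussion in \ref{prodHDG} to see that $\hdg_Z^{ss}$ surjects onto each $\su(\H_1(J^{(f,p^i)},\Q),\phi_{p^i})$, observe that these are pairwise non-isomorphic simple $\Q$-Lie algebras, and conclude via Lemma \ref{simplePROD}. You have merely spelled out the simplicity and non-isomorphism checks (restriction of scalars of an absolutely simple algebra, and the dimension count) which the paper states without elaboration.
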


\begin{proof}
It follows from Theorem \ref{main} that the semisimple part of the Lie algebra
of $\Hdg(J^{(f,p^i)})$ coincides with $\su(\H_1(J^{(f,p^i)},\Q),\phi_{p^i})$
for all $i \le r$; notice that all $\su(\H_1(J^{(f,p^i)},\Q),\phi_{p^i})$'s are
mutually nonisomorphic simple $\Q$-Lie algebras. Now the result follows from
arguments of Subsect. \ref{prodHDG} combined with Lemma \ref{simplePROD}.
\end{proof}

We keep the notation and assumptions of Theorem \ref{mainP}. For every positive
integer $i$ let us  put
\[
\Q(\zeta_{p^i})_{-} := \{e \in \Q(\zeta_{p^i})\mid
\bar{e}=-e\}\subset \Q(\zeta_{p^i})=\End^0(J^{(f,p^i)})\subset
\End_{\Q}(\H_1(J^{(f,p^i)},\Q)).\] Let us put
\begin{gather*}
\Tr_i=\Tr_{\Q(\zeta_{p^{i+1}})/\Q(\zeta_{p^i})}:
\Q(\zeta_{p^{i+1}})\to \Q(\zeta_{p^i}),\\
\E^{p,r}_{-} :=\{ (e_i)_{i=1}^r  \in \oplus_{i=1}^r \Q(\zeta_{p^i})_{-}\mid
\Tr_i(e_{i+1})=e_i \ \forall i<r\}\subset
\\
\oplus_{i=1}^r \Q(\zeta_{p^i})_{-} \subset \oplus_{i=1}^r
\End_{\Q}(\H_1(J^{(f,p^i)},\Q))\subset \End_{\Q}(\H_1(Z,\Q)).
\end{gather*}
Recall \cite{XueZ} that (under our assumptions) the center of the
$\Q$-Lie algebra $\hdg_Z$ coincides with
$$\E^{p,r}_{-}\subset
\End_{\Q}(\H_1(Z,\Q)).$$ Then the reductiveness of $\hdg_Z$ combined with
Theorem \ref{mainP} implies the following statement.

\begin{thm}
\label{mainPP} Suppose that $p$ is an odd prime, $n \ge 4$ is a positive
integer such that $p$ does not divide $n(n-1)$. Suppose that $f(x) \in \C[x]$
is a degree $n$ polynomial without multiple roots.  Suppose that there exists a
subfield $K$ of $\C$ that contains all the coefficients of $f(x)$. Let us
assume that $f(x)$ is irreducible over $K$ and the Galois group $\Gal(f)$ of
$f(x)$ over $K$ is either $\Sn$ or  $\An$. Assume additionally that either $n
\ge 5$ or $n=4$ and $\Gal(f)=\ST_4$. Let $r$ be a positive integer and $q=p^r$.
Let us put $Z=\prod_{i=1}^r J^{(f,p^i)}$. If $n>q$ then
$${\hdg_Z} =\E^{p,r}_{-}\oplus [\oplus_{i=1}^r \su(\H_1(J^{(f,p^i)},\Q),\phi_{p^i})].$$
\end{thm}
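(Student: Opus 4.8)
The plan is to deduce Theorem~\ref{mainPP} from Theorem~\ref{mainP} together with the structural description of reductive Lie algebras and the computation of the center of $\hdg_Z$ borrowed from \cite{XueZ}. First I would recall that $\hdg_Z$ is a reductive $\Q$-Lie algebra, so it splits canonically as a direct sum of its center $\c(\hdg_Z)$ and its semisimple part $\hdg_Z^{ss}=[\hdg_Z,\hdg_Z]$. By the result of \cite{XueZ} quoted just before the statement, the center $\c(\hdg_Z)$ coincides with $\E^{p,r}_{-}\subset\End_{\Q}(\H_1(Z,\Q))$. By Theorem~\ref{mainP} (whose hypotheses are literally those of Theorem~\ref{mainPP}, using that $p\nmid n(n-1)$ forces $n\not\equiv 1\bmod q$ so that condition (B) of Theorem~\ref{main} applies for every $p^i$ with $i\le r$), the semisimple part is $\hdg_Z^{ss}=\oplus_{i=1}^r\su(\H_1(J^{(f,p^i)},\Q),\phi_{p^i})$. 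Combining these two identifications yields
$$\hdg_Z=\c(\hdg_Z)\oplus\hdg_Z^{ss}=\E^{p,r}_{-}\oplus\Bigl[\oplus_{i=1}^r\su(\H_1(J^{(f,p^i)},\Q),\phi_{p^i})\Bigr],$$
which is exactly the asserted formula.

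The one point that needs a little care is that the direct sum decomposition of a reductive Lie algebra into center plus semisimple part is an \emph{internal} direct sum inside $\End_{\Q}(\H_1(Z,\Q))$, and one must check that the two summands identified above sit inside $\End_{\Q}(\H_1(Z,\Q))$ in the compatible way. Both $\E^{p,r}_{-}$ and each $\su(\H_1(J^{(f,p^i)},\Q),\phi_{p^i})$ are described in the excerpt as explicit subalgebras of $\oplus_{i=1}^r\End_{\Q}(\H_1(J^{(f,p^i)},\Q))\subset\End_{\Q}(\H_1(Z,\Q))$, using the $\Hdg(Z)$-invariant splitting $\H_1(Z,\Q)=\oplus_{i=1}^r\H_1(J^{(f,p^i)},\Q)$ recalled in Subsect.~\ref{prodHDG}. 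Since $\E^{p,r}_{-}$ consists of scalar (endomorphism-algebra) operators on each factor while each $\su$-summand is traceless on the corresponding factor and zero on the others, the two pieces intersect trivially and their sum is direct; this is just the abstract fact that center and derived subalgebra meet only in $\{0\}$ in a reductive Lie algebra, transported through the faithful representation on $\H_1(Z,\Q)$.

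The main (and only real) obstacle is verifying that Theorem~\ref{mainP} genuinely applies, i.e.\ that the numerical hypothesis $p\nmid n(n-1)$ together with $n>q=p^r$ guarantees the hypotheses of Theorem~\ref{main} for \emph{every} intermediate modulus $q'=p^i$, $1\le i\le r$. This is immediate: $p\nmid n$ is assumed, $n>q\ge q'$, and $p\nmid(n-1)$ implies $n\not\equiv 1\bmod q'$ for all $i$, so condition (B) of Theorem~\ref{main} holds for each $J^{(f,p^i)}$; hence Theorem~\ref{mainP} is applicable and, in particular, the simple Lie algebras $\su(\H_1(J^{(f,p^i)},\Q),\phi_{p^i})$ are the semisimple parts of $\hdg_{J^{(f,p^i)}}$. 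Once this is in place, the proof is a one-line assembly of the reductive decomposition, Theorem~\ref{mainP}, and the center computation from \cite{XueZ}, exactly as in the short proof the authors give.
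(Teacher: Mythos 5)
Your proof is correct and follows essentially the same approach as the paper: decompose the reductive Lie algebra $\hdg_Z$ as center plus semisimple part, identify the center as $\E^{p,r}_{-}$ via \cite{XueZ}, and identify the semisimple part via Theorem~\ref{mainP}. The extra verification that Theorem~\ref{main}'s hypothesis (B) holds for each $p^i$ is sound but not strictly needed here, since Theorem~\ref{mainP} is stated under exactly the hypotheses of Theorem~\ref{mainPP} and can be cited directly.
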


\begin{rem} \label{isog} We keep the assumptions of Theorem \ref{mainPP}. Let us
fix an isogeny $\alpha: J(C_{f,p^r})\to \prod_{i=1}^r J^{(f,p^i)}=Z$. Then
$\alpha$ induces an isomorphism of $\Q$-vector spaces $\alpha:
\H_1(J(C_{f,p^r}),\Q) \cong \H_1(Z,\Q)$. Clearly, the Hodge group of
$\Hdg(J(C_{f,p^r}))$ coincides with $\alpha^{-1}\Hdg(Z)\alpha$. This implies
that the $\Q$-Lie algebra of $\Hdg(J(C_{f,p^r})))$ coincides with $$\alpha
\{\E^{p,r}_{-}\oplus [\oplus_{i=1}^r
\su(\H_1(J^{(f,p^i)},\Q),\phi_{p^i})]\}\alpha^{-1}.$$
\end{rem}

\end{document}